\newcommand\datver[1]{\def\datverp
{\par\boxed{\boxed{\text{Version: #1; Run: \today}}}}}\datver{0.1}
\newcommand\lp{`}
\newcommand\rp{'}
\newcommand\dlp{``}
\newcommand\drp{''}
\newcommand{\mfk}{\mathfrak}
\newcommand\mfkg{\mfk{g}}
\newcommand{\supp}{\operatorname{supp}}
\newcommand{\Spec}{\operatorname{Spec}}
\newcommand{\Ind}{\operatorname{Ind}}
\newcommand{\CC}{\mathbb C}
\newcommand{\NN}{\mathbb N}
\newcommand{\RR}{\mathbb R}
\newcommand\Prim{\operatorname{Prim}}
\newcommand{\CIc}{{\mathcal C}^{\infty}_{\text{c}}}
\newcommand{\maC}{\mathcal C}
\newcommand{\maD}{\mathcal D}
\newcommand{\maF}{\mathcal F}
\newcommand{\maG}{\mathcal G}
\newcommand{\maH}{\mathcal H}
\newcommand{\maI}{\mathcal I}
\newcommand{\maK}{\mathcal K}
\newcommand{\maL}{\mathcal L}
\newcommand{\maR}{\mathcal R}
\newcommand{\maT}{\mathcal T}
\newtheorem{theorem}{Theorem}[section]
\newtheorem{proposition}[theorem]{Proposition}
\newtheorem{corollary}[theorem]{Corollary}
\newtheorem{lemma}[theorem]{Lemma}
\newtheorem{notation}[theorem]{Notations}
\theoremstyle{definition}
\newtheorem{definition}[theorem]{Definition}
\theoremstyle{remark}
\newtheorem{remark}[theorem]{Remark}
\newtheorem{example}[theorem]{Example}
\newcommand{\ev}{\operatorname{ev}}
\author[V. Nistor]{Victor Nistor} \address{Universit\'{e} de Lorraine,
  UFR MIM, Ile du Saulcy, CS 50128, 57045 METZ, France
%
%
}
\email{victor.nistor@univ-lorraine.fr} 
\author[N. Prudhon]{Nicolas Prudhon} \address{Universit\'{e} de
  Lorraine, UFR MIM, Ile du Saulcy, CS 50128 57045 METZ, France}
\email{nicolas.prudhon@univ-lorraine.fr}
\thanks{ The authors were partially supported by ANR-14-CE25-0012-01
  (SINGSTAR).\\
Manuscripts available from 
{\bf 
http:{\scriptsize //}iecl.univ-lorraine.fr{\scriptsize /}\,$\tilde { }$\,Victor.Nistor{\scriptsize /}}.\\
AMS Subject classification (2010): 46L05 (primary), 45B05, 47L80, 58J40. 
}
\date\today
\begin{document}


\title[Exhausting families]{Exhausting families of representations and
  spectra of pseudodifferential operators}

\begin{abstract} 
Families of representations of suitable Banach algebras provide a
powerful tool in the study of the spectral theory of
(pseudo)differential operators and of their Fredholmness. We introduce
the new concept of an exhausting family of representations of a
$C\sp{\ast}$-algebra $A$.  An {\em exhausting family} of
representations of a $C\sp{\ast}$-algebra $A$ is a set $\maF$ of
representations of $A$ with the property that every irreducible
representation of $A$ is weakly contained in some $\phi \in \maF$.  An
exhausting family $\maF$ of representations of $A$ has the property
that \dlp $a \in A$ is invertible if, and if, $\phi(a)$ is invertible
for any $\phi \in \maF$.\drp\ Consequently, the spectrum of $a$ is
given by $\Spec(a) = \cup_{\phi \in \maF} \Spec(\phi(a))$. In other
words, every exhausting family of representations is invertibility
sufficient, a concept introduced by Roch in {\em Algebras of
  approximation sequences: structure of fractal
  algebras} (2003). We prove several properties of exhausting
families and we provide necessary and sufficient conditions for a
family of representations to be exhausting. Using results of Ionescu
and Williams (Indiana Univ. Math. J. 2009), we show that the regular
representations of amenable, second countable, locally compact
groupoids with a Haar system form an exhausting family of
representations. If $A$ is a separable $C\sp{\ast}$-algebra, we show
that a family $\maF$ of representations of $A$ is exhausting if, and
only if, it is invertibility sufficient. However, this result is not
true, in general, for non-separable $C\sp{\ast}$-algebras.  With an
eye towards applications, we extend our results to the case of
unbounded operators. A typical application of our results is to
parametric families of differential operators arising in the analysis
on manifolds with corners, in which case we recover the fact that a
parametric operator $P$ is invertible if, and only if, its Mellin
transform $\hat P(\tau)$ is invertible, for all $\tau \in
\RR\sp{n}$. In view of possible applications, we have tried to make
this paper accessible to non-specialists in $C\sp{\ast}$-algebras.
\end{abstract}

\maketitle

\tableofcontents

\section*{Introduction}

A typical result in spectral theory of $N$-body Hamiltonians
\cite{DamakGeorgescu, GeorgescuIftimovici, GeorgescuNistor2, Simon,
  MantoiuReine} associates to the Hamiltonian $H$ a family of other
operators $H_{\phi}$, $\phi \in \maF$, such that the essential
spectrum $\Spec_{ess}(H)$ of $H$ is obtained in terms of the usual
spectra $\Spec(H_\phi)$ of $H_{\phi}$ as the closure of the union of
the later:
\begin{equation}
	\Spec_{ess}(H) \, = \, \overline{\cup}_{\phi \in \maF} \,
        \Spec(H_{\phi}) \,.
\end{equation}
It was noticed that sometimes the closure is not necessary, and one of
the motivations of our paper is to clarify this issue. Our approach is
based on the well known fact that the operators $H_{\phi}$ are
obtained as homomorphic images (in a suitable sense) of the operator
$H$, that is $H_{\phi} = \phi(H)$, where the morphisms $\phi$ are part
of a suitable family of representations $\maF$ of a certain
$C\sp{\ast}$-algebra associated to $H$. This justifies the study of
families of representations. See for example \cite{GeorgescuNistor2}
for an illustration of this approach. As a note on our terminology,
by {\em morphism} and {\em representation} of $C\sp{\ast}$-algebras,
we shall always mean a {\em $*$-morphism}, respectively,
a {\em $*$-representation}.

Another, related, motivation comes from the characterization of
Fredholm integral operators \cite{DamakGeorgescu, NP, RochBookLimit, RochBookNGT,
  SchroheFrechet, SchSch1, SchulzeBook91}.  We are
especially interested in the approach to this question using groupoids
\cite{DLR, DebordSkandalis, LMN1, LNGeometric, Vassout}.  More
precisely, for suitable manifolds $M$ and for differential operators
$D$ on $M$ compatible with the geometry, there was devised a procedure
to associate to $M$ the following data: (i) spaces $Z_\alpha$, $\alpha
\in I$; \ (ii) groups $G_\alpha$, $\alpha \in I$; and (iii)
$G_\alpha$-invariant differential operators $D_\alpha$ acting on
$Z_\alpha \times G_\alpha$.  This data can be used to characterize the
Fredholm property of $D$ as follows. Let $m$ be the order of $D$, then
\begin{multline}\label{eq.Fredholm.c}
	D : H^s(M) \to H^{s-m}(M) \mbox{ is Fredholm}
        \ \ \Leftrightarrow \ \ D \mbox{ is elliptic and }\\
	\ D_{\alpha} \ \mbox{ is invertible for all } \alpha \in I \,.
\end{multline}
Moreover, the spaces $Z_\alpha$ and the groups $G_\alpha$ are
independent of $D$.  If $M$ is compact (without boundary), then the
index $I$ is empty (so there are no $D_\alpha$s). In general, for
non-compact manifolds, the conditions on the operators $D_\alpha$ are,
nevertheless, necessary.  The non-compact geometries to which this
characterization of Fredholm operators applies include: asymptotically
euclidean manifolds, asymptotically hyperbolic manifolds, manifolds
with poly-cylindrical ends, and many others (see \cite{frascatti,
  nistorDesing} for surveys).  Again, the operators $D_\alpha$ are
homomorphic images of the operator $D$, which leads us again to the
study of families of representations.

The results in \cite{GeorgescuIftimovici, GeorgescuNistor2, LMN1}
mentioned above are the main motivation for this work, which is a
purely theoretical one on the representation theory of
$C\sp{\ast}$-algebras, even though the applications are to spectral
theory and (pseudo)differential operators.

Our main results concern \dlp exhausting families of
representations,\drp\ a concept that we introduce and study in this
paper. To explain our results, let us discuss first the important,
related concept of an \dlp invertibility sufficient family of
representations.\drp\ Recall \cite{Roch} that {\em an invertibility
  sufficient family} of representations $\maF$ of a unital
$C\sp{\ast}$-algebra $A$ is a set of representations with the property
that $a \in A$ is invertible if, and only if, $\phi(a)$ is invertible
for all $\phi \in \maF$. This concept is directly applicable to the
problems mentioned in the beginning of this introduction. It is
equivalent to the concept of a {\em strictly norming family} of
representations \cite{ExelInvGr,Roch}, a concept that we recall in the
main body of the paper. In practice, it is not straightforward to
check that a family of representations is invertibility sufficient or
strictly norming. Motivated by this, we introduce an {\em exhausting
  family} of representations of $A$ as a set $\maF$ with the property
that every irreducible representation of $A$ is weakly contained in a
representation $\phi \in \maF$. Exhausting families of representations
turn out to have many useful properties.

Here are the contents of the sections of the paper and our main
results.  In the following section--the second section--we discuss
some results on faithful family of representations in preparation and
as motivation for the study of exhausting families of representations,
which is the main thrust of the third section. Thus, in the third
section, we discuss and prove various basic properties of exhausting
families.  We also discuss their relation with invertibility
sufficient families of representations. We prove that the
$C\sp{\ast}$-algebras of groupoids $\maG$ that satisfy the Effros-Hahn
conjecture and have amenable isotropy groups have the property that
the family of regular representations $\maR = \{\pi_x\}$ is exhausting
(here $x$ is ranging through the units of $\maG$). We notice that an
example due to Voiculescu shows that this result is not true in
general.  In the fourth section we provide a necessary and sufficient
conditions for a family of representations of $A$ to be exhausting in
terms of the topology on the primitive ideal spectrum $\Prim(A)$ of
$A$. In particular, we show that for a separable $C\sp{\ast}$-algebra,
a set of representations of $A$ is invertibility sufficient if, and
only if, it is exhausting.  We also provide an example of an
invertibility sufficient family that is not exhausting in the
non-separable case. The fifth section contains some material that
allows us to treat also unbounded operators affiliated to a
$C\sp{\ast}$-algebra. The last section--the sixth--contains a typical
application of our results to parametric families of differential
operators.  This type of operators arises in the analysis on manifolds
with corners (more precisely, in the case of manifolds with
poly-cylindrical ends). In that case, we recover the fact that an
operator compatible with the geometry is invertible if, and only if,
its Mellin transform is invertible. Due to the fact that the main
applications are to areas other than the study of $C\sp{\ast}$-algebras,
we have writen the paper with an eye towards the non-specialist in
$C\sp{\ast}$-algebras. In particular, in addition to the relevant
references, we have also included a few short
proofs of some known (or essentially known) results.

We thank V. Georgescu for useful discussions and for providing us
copies of his papers. We also thank D. and I. Belti\c{t}\u{a}, S. Baaj,
M. M\u{a}ntoiu, J. Renault, and G. Skandalis and for useful comments. The 
first named author would like to also than the Max Planck Institute for
Mathematics in Bonn, where part of this work was completed, for its
hospitality. After a first version of this work has been circulated,
we have learned of the nice paper of Exel \cite{ExelInvGr}, which has
also prompted us to change some of the terminology used in this paper.

\section{$C\sp{\ast}$-algebras and their primitive ideal spectrum}

We begin with a review of some needed general $C\sp{\ast}$-algebra
results. We recall \cite{Dixmier} that a {\em $C\sp{\ast}$-algebra} is
a complex algebra $A$ together with a conjugate linear involution $*$
and a complete norm $\| \ \|$ such that $(ab)^* = b^* a^*$, $\|ab\|
\le \|a\| \|b\|$, and $\|a^*a\| = \|a\|^2$, for all $a, b \in A$.
(The fact that $*$ is an involution means that $a^{**} = a$.) In
particular, a $C\sp{\ast}$-algebra is also a Banach algebra.  Let
$\maH$ be a Hilbert space and denote by $\maL(\maH)$ the space of
linear, bounded operators on $\maH$. One of the main reasons why
$C\sp{\ast}$-algebras are important in applications is that every
norm-closed subalgebra $A \subset \maL(\maH)$ that is also closed
under taking Hilbert space adjoints is a
$C\sp{\ast}$-algebra. Abstract $C\sp{\ast}$-algebras have many
non-trivial properties that can then be used to study the concretely
given algebra $A$. Conversely, every abstract $C\sp{\ast}$-algebra is
isometrically isomorphic to a norm closed subalgebra of $\maL(\maH)$
(the Gelfand-Naimark theorem, see \cite[theorem 2.6.1]{Dixmier}).  A
{\em representation} of a $C\sp{\ast}$-algebra $A$ on the Hilbert
space $\maH_\pi$ is a morphism $\pi : A \to \maL(\maH_\pi)$ to the
algebra of bounded operators on $\maH_\pi$. 
(Recall that, in this paper, by a morphism of $C\sp{\ast}$-algebras,
we shall always mean a {\em $*$-morphism}.)
We shall use the fact that
every morphism $\phi$ of $C\sp{\ast}$-algebras (and hence any
representation of a $C\sp{\ast}$-algebra) has norm $\|\phi\| \le
1$. Consequently, every bijective morphism of $C\sp{\ast}$-algebras is
an isometric isomorphism, and, in particular
\begin{equation}\label{eq.norm}
	\|\phi(a)\| = \|a + \ker(\phi)\|_{A/\ker(\phi)} \,.
\end{equation}

A two-sided ideal $I \subset A$ is called {\em primitive} if it is the
kernel of an irreducible representation. We shall denote by $\Prim(A)$
the set of primitive ideals of $A$. For any two-sided ideal $J \subset
A$, we have that its primitive ideal spectrum $\Prim(J)$ identifies
with the set of all the primitive ideals of $A$ {\em not} containing
the two-sided ideal $J \subset A$. It turns out then that the sets of
the form $\Prim(J)$, where $J$ ranges through the set of two-sided
ideals $J \subset A$, define a topology on $\Prim(A)$, called the {\em
  Jacobson topology} on $\Prim(A)$. If $A = C(K)$, the algebra of
continuous functions on a compact space $K$, then $K$ and $\Prim(A)$
are canonically homeomorphic.  See Example \ref{matrix} for a slightly
more involved example.

Throughout this paper, we shall denote by $A$ a generic $C\sp{\ast}$-algebra.
Also, by $\phi : A \to \maL(\maH_\phi)$ we shall denote generic
representations of $A$. For any representation $\phi$ of $A$, we
define its support, $\supp(\phi) \subset \Prim(A)$ as the complement
of $\Prim(\ker(\phi))$, that is, $\supp(\phi) := \Prim(A) \smallsetminus
\Prim(\ker(\phi))$ is the set of primitive ideals of $A$ {\em
  containing} $\ker(\phi)$.

\begin{remark}\label{rem.one}
The irreducible representations of $A$ do not form a set (there are
too many of them). The {\em unitary equivalence classes} of
irreducible representations of $A$ do form a set however, which we
shall denote by $\hat A$. By $\pi : A \to \maL(\maH_\pi)$ we shall
denote an arbitrary {\em irreducible} representation of $A$. There
exists then by definition a surjective map
\begin{equation}\label{eq.def.can}
	can: \hat A \to \Prim(A)
\end{equation}
that associates to (the class of) each irreducible representation $\pi
\in \hat A$ its kernel $\ker(\pi) $. For each $a \in A$ and each
irreducible representation $\pi$ of $A$, the algebraic properties of
$\pi(a)$ depend only on the kernel of $\pi$.  That yields a well
defined function
\begin{equation}\label{eq.can.surj}
	can: \, \hat A \, \ni \pi \, \to \, \|\pi(a)\| \in [0 , \|a\|
        ] \, ,
\end{equation}
which descends to a well defined function
\begin{equation}\label{eq.can.surj2}
	n_a \, : \, \Prim(A) \ni \pi \to \|\pi(a)\| \in [0 , \|a\| ] \ ,
        \quad n_a(\ker(\pi)) = \|\pi(a)\| \,,
\end{equation}
because if $\phi_1$ and $\phi_2$ are representations of $A$ with the
same kernel, then $\|\phi_1(a)\| = \|\phi_2(a)\|$ for all $a \in A$.
\end{remark}

A $C\sp{\ast}$-algebra is {\em type I} if, and only if, the surjection
$can : \hat A \to \Prim(A)$ of Equation \eqref{eq.def.can} is, in
fact, a bijection \cite{Dixmier} (a deep result).  Then the discussion
of Remark \ref{rem.one} becomes unnecessary and several arguments
below will be (slightly) simplified since we will not have to make
distinction between equivalence classes of irreducible representations
and their kernels. Fortunately, many (if not all) of the
$C\sp{\ast}$-algebras that arise in the study of pseudodifferential
operators and of other practical questions are type I
$C\sp{\ast}$-algebras. In spite of this, it seems unnatural at this
time to restrict our study to type I $C\sp{\ast}$-algebras.
Therefore, we will not assume that $A$ is a type I
$C\sp{\ast}$-algebra, unless this assumption is really needed.  When
$A$ is a type I $C\sp{\ast}$-algebra, we will identify $\hat A$ and
$\Prim(A)$.

We shall need the following simple (and well known) lemma
\cite{Dixmier}.

\begin{lemma} 
\label{lemma.usc}
The map $n_a: \Prim(A) \ni I \to \|a + I\|_{A/I} \in [0 , \|a\| ]$ is
lower semi-continuous, that is, the set $\{I \in \Prim(A), \, \|a +
I\|_{A/I} > t \, \}$ is open for any $t \in \RR$.
\end{lemma}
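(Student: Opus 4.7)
The strategy is to exhibit $U_t := \{P \in \hat A \,:\, n_a(P) > t\}$ as $\hat J$ for a suitable two-sided ideal $J \subset A$; by definition of the Jacobson topology, this will give openness. The mechanism is continuous functional calculus applied to the positive element $b := a^{*}a$, combined with the observation that $\|\pi(a)\|^{2} = \|\pi(b)\|$ equals the spectral radius, hence the supremum of the spectrum, of the positive operator $\pi(b)$.

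\textbf{Key steps.} First I reduce to the case of a positive element. For $\pi$ irreducible with $P = \ker(\pi)$ and $b = a^*a \ge 0$, the $C^*$-identity yields
\[
	\|\pi(a)\|^{2} \,=\, \|\pi(a)^{*}\pi(a)\| \,=\, \|\pi(b)\| \,.
\]
Since $\pi(b) \ge 0$, its norm coincides with the supremum of its spectrum. Thus $\|\pi(a)\| > t$ (with $t \ge 0$, the case $t<0$ being trivial) is equivalent to the spectrum of $\pi(b)$ meeting $(t^{2}, \infty)$. Next, choose the continuous function $f_{t} : [0,\infty) \to [0,\infty)$ defined by $f_{t}(s) = \max(s - t^{2},\, 0)$; since $f_{t}(0) = 0$, the continuous functional calculus produces an element $f_{t}(b) \in A$ (if $A$ is non-unital, we work in the unitization and observe that the image lies in $A$ because $f_{t}$ vanishes at $0$). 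By the spectral mapping theorem,
\[
	f_{t}(\pi(b)) \,=\, \pi(f_{t}(b)) \,,
\]
and this operator is nonzero precisely when the spectrum of $\pi(b)$ hits $(t^{2}, \infty)$. Combining the two equivalences,
\[
	\|\pi(a)\| > t \quad \Longleftrightarrow \quad \pi(f_{t}(b)) \neq 0 \quad \Longleftrightarrow \quad f_{t}(b) \notin \ker(\pi) = P \,.
\]

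\textbf{Conclusion.} Let $J \subset A$ be the closed two-sided ideal generated by $f_{t}(b)$. A closed two-sided ideal $P$ contains $f_{t}(b)$ if, and only if, it contains $J$, so the equivalence above becomes
\[
	\{P \in \hat A \,:\, n_{a}(P) > t\} \,=\, \{P \in \hat A \,:\, J \not\subset P\} \,=\, \hat J \,,
\]
which is open in the Jacobson topology by definition. Since the case $t < 0$ gives the whole space $\hat A$, the set $\{P : n_a(P) > t\}$ is open for every $t \in \RR$, proving lower semi-continuity.

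\textbf{Expected obstacle.} There is no serious obstacle; the only delicate points are the unitization subtlety in the functional calculus (handled by insisting $f_{t}(0) = 0$) and remembering that $\ker(\pi)$ being a \emph{closed} two-sided ideal is what allows us to replace the single element $f_t(b)$ by the generated ideal $J$ when reformulating non-membership in terms of the Jacobson topology.
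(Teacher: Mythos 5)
Your proof is correct and follows essentially the same route as the paper: both apply continuous functional calculus to $a^{*}a$ with a function vanishing exactly on $[0,t^{2}]$ (the paper's $\chi$, your $f_{t}$), and then identify the set in question with $\hat J$ for the closed two-sided ideal $J$ generated by the resulting element. The only cosmetic difference is that you describe the open set directly while the paper exhibits its complement as closed.
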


We include the simple proof for the benefit of the non-specialist.
\begin{proof}
Let us fix $t \in \RR$. Since $n_a$ takes on non-negative values, we
may assume $t \ge 0$. Let then $\chi : [0, \infty) \to [0, 1]$ be a
  continuous function that is zero on $[0, t^2]$ but is $>0$ on $(t^2,
  \infty)$ and let $b = \chi(a^*a)$, which is defined using the
  functional calculus with continuous functions. If $\phi : A \to
  \maL(\maH_\phi)$ is a representation of $A$, then we have that
  $\|\phi(a)\|^2 = \|\phi(a^*a)\| \le t^2$ if, and only if,
\begin{equation*}
	\chi(\phi(a^*a)) = \phi( \chi(a^*a)) = \phi(b) = 0 \,.
\end{equation*}
Let then $J$ be the (closed) two sided ideal generated by $b$, that
is, $J := \overline{A b A}$.  Then
\begin{multline*}
	\{I \in \Prim(A), \, \|a + I\|_{A/I} \le t \, \} \, = \, \{I
        \in \Prim(A), \, b \in I \, \} \\
        = \, \{I \in \Prim(A), \, J \subset I \, \} \, = \, \Prim(A)
        \smallsetminus \Prim(J)\,,
\end{multline*}
is hence a closed set. Consequently, $\{I \in \Prim A, \, \|a +
I\|_{A/I} > t \, \}$ is open, as claimed.
\end{proof}

\section{Faithful families}

Let $\maF$ be a set of representations of $A$. We say that the family
$\maF$ is {\em faithful} if the direct sum representation
$\rho:=\oplus_{\phi \in \maF}\, \phi$ is injective. Faithful families
of irreducible representations of a $C\sp{\ast}$-algebra $A$ were
called {\em weakly sufficient} in \cite{Roch}.
The results of this subsection are for the most part very well-known,
see for instance \cite{Roch}, but we include them for the purpose of
later reference and in order to compare them with the properties of
exhausting families and strictly norming families. We have the
following well known result that will serve us as a model for
characterization of ``strictly norming families'' of representations
in the next subsection.

\begin{proposition} \label{prop.faithful1}
Let $\maF$ be a family of representations of the $C\sp{\ast}$-algebra
$A$. The following are equivalent:
\begin{enumerate} [(i)]
	\item The family $\maF$ is faithful.
	\item The union $\cup_{\phi \in \maF}\, \supp(\phi)$ is dense
          in $\Prim(A)$.
	\item $\|a\| = \sup_{\phi \in \maF}\, \|\phi(a)\|$ for all $a
          \in A$.
\end{enumerate}
\end{proposition}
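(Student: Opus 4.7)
The plan is to prove (i)$\Leftrightarrow$(iii) as an immediate consequence of general facts about morphisms of $C^*$-algebras, and then to prove (i)$\Leftrightarrow$(ii) via the Jacobson-topology description of closed ideals.

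For (i)$\Leftrightarrow$(iii), I would form the direct-sum representation $\rho := \oplus_{\phi \in \maF}\phi : A \to \maL(\oplus_{\phi \in \maF} \maH_\phi)$, which satisfies $\|\rho(a)\| = \sup_{\phi \in \maF}\|\phi(a)\|$ by the definition of the norm on the Hilbert direct sum. Since $\rho$ is a morphism of $C^*$-algebras, the isometry identity \eqref{eq.norm} applied to $\rho$ gives $\|\rho(a)\| = \|a + \ker(\rho)\|_{A/\ker(\rho)}$, so $\rho$ is injective if, and only if, $\|\rho(a)\| = \|a\|$ for all $a \in A$. This is precisely the equivalence of (i) and (iii).

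For (i)$\Leftrightarrow$(ii), the key ingredient is the standard fact (consequence of the Gelfand--Naimark theorem and Zorn's lemma) that in any $C^*$-algebra every closed two-sided ideal $I$ equals the intersection of the primitive ideals containing it; in particular $\cap_{P \in \hat A} P = 0$. I would apply this first to each $\ker(\phi)$, obtaining
\begin{equation*}
    \ker(\phi) \, = \, \bigcap_{P \in \supp(\phi)} P \,,
\end{equation*}
and then conclude
\begin{equation*}
    \ker(\rho) \, = \, \bigcap_{\phi \in \maF} \ker(\phi) \, = \, \bigcap_{P \in S} P \,, \qquad S \, := \, \bigcup_{\phi \in \maF} \supp(\phi) \,.
\end{equation*}

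The last step is to identify the closure of $S$ in the Jacobson topology. By the definition of that topology, the closed sets are exactly the sets $\hat A \smallsetminus \hat J = \{P \in \hat A : P \supseteq J\}$ for closed two-sided ideals $J$, and the smallest such set containing $S$ corresponds to $J = \cap_{P \in S} P = \ker(\rho)$. Therefore $\overline{S} = \hat A$ if, and only if, every primitive ideal $P$ contains $\ker(\rho)$, which, using $\cap_{P \in \hat A} P = 0$, is equivalent to $\ker(\rho) = 0$, i.e.\ to (i). The only real subtlety is keeping track of the order-reversing correspondence between ideals and closed subsets of $\hat A$; once that is set straight, the argument is routine.
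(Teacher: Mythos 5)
Your proof is correct, but it follows a genuinely different route from the paper's for the part involving the supports. The paper proves the cycle (i)$\Rightarrow$(ii)$\Rightarrow$(iii)$\Rightarrow$(i): the step (ii)$\Rightarrow$(iii) is the substantive one there, and it rests on the lower semicontinuity of $\pi \mapsto \|\pi(a)\|$ on $\hat A$ (Lemma \ref{lemma.usc}) together with the fact that every $a$ attains its norm at some irreducible representation, so that the nonempty open set $\{\pi : \|\pi(a)\| > \|a\|-\epsilon\}$ must meet the dense union of supports. You instead prove (i)$\Leftrightarrow$(iii) and (i)$\Leftrightarrow$(ii) separately: the first is the same isometry observation the paper uses for (iii)$\Rightarrow$(i), upgraded to an equivalence via Equation \eqref{eq.norm}; the second replaces the semicontinuity argument by the hull--kernel description of closures in the Jacobson topology, resting on the standard fact that every closed two-sided ideal is the intersection of the primitive ideals containing it (so that $\overline{S}$ is exactly the hull of $\ker(\rho)$ and density amounts to $\ker(\rho)=0$). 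Your version is more conceptual and shorter, and it makes the order-reversing ideal/closed-set dictionary do all the work; the price is that you import the (standard but not proved in the paper) theorem that closed ideals are intersections of primitive ideals, whereas the paper only needs the weaker facts it explicitly establishes or cites, which fits its stated aim of being accessible to non-specialists. Both arguments are complete and correct.
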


\begin{proof}
(i)$\Rightarrow$(ii). We proceed by contradiction. Let us assume that
  (i) is true, but that (ii) is not true. That is, we assume that
  $\cup_{\phi \in \maF}\, \supp(\phi)$ is not dense in $\Prim(A)$.
  Then there exists a non empty open set $\Prim(J)\subset \Prim(A)$
  that does not intersect $\cup_{\phi \in \maF}\, \supp(\phi)$, where
  $J \subset A$ is a non-trivial two-sided ideal. Then $J \neq 0$ is
  contained in the kernel of $\oplus_{\phi \in\maF}\, \phi$ and hence
  $\maF$ is not faithful. This is a contradiction, and hence (ii) must
  be true if (i) is true.\\
(ii)$\Rightarrow$(iii). For a given $a\in A$, the map sending the
  kernel $\ker \pi$ of an irreducible representation $\pi$ to
  $\|\pi(a)\|$ is a lower semi-continuous function $\Prim(A) \to [0,
    \infty)$, by Lemma \ref{lemma.usc}.  Moreover, for any $a\in A$
    there exists an irreducible representation $\pi_a$ such that
    $\|\pi_a(a)\|=\|a\|$.  Hence, for every $\epsilon > 0$, $\{\pi \in
    \Prim(A), \|\pi(a)\| > \|a\| - \epsilon \}$ is a non empty open
    set (it contains $\ker \pi_a$) and then it contains some $\pi \in
    \cup_{\phi \in \maF}\, \supp(\phi)$, since the later set was
    assumed to be dense in $\Prim(A)$.  Let $\phi \in \maF$ be such
    that $\ker(\pi) \supset \ker(\phi)$. Then
\begin{equation*}
	\|a\| \ge \|\phi(a)\| \ge \|\pi(a)\| > \|a\|-\varepsilon \,,
\end{equation*}
where the first inequality is due to the general fact that
representations of $C\sp{\ast}$-algebras have norm $\le 1$ and the
second one is due to the fact that
\begin{equation*}
	\|\phi(a)\|  =  \|a + \ker(\phi)\|_{A/\ker(\phi)} 
	\ge   \|a + \ker(\pi)\|_{A/\ker(\pi)} = \|\pi(a)\| \,,
\end{equation*}
by Equation \eqref{eq.norm}.  Consequently, $\|a\| = \sup_{\phi \in
  \maF}\, \|\phi(a)\|$, as desired.\\
(iii)$\Rightarrow$(i).  Let $\rho := \oplus_{\phi \in \maF}\, \phi : A
\to \oplus_{\phi \in \maF}\, \mathcal{L}(H_\phi)$. We need to show
that $\rho$ is injective. The norm on $\oplus_{\phi \in \maF}\,
\mathcal{L}(H_\phi)$ is the sup norm, that is, $\|(T_\phi)_{\phi \in
  \maF}\| = \sup_{\phi \in \maF}\, \|T_\phi\|$.  Therefore $\|\rho(a)
\| = \sup_{\phi \in \maF} \|\phi(a)\| = \|a\|$, since we are assuming
(iii). Consequently, $\rho$ is isometric, and hence it is injective.
\end{proof}

In the next proposition we shall need to assume that $A$ is unital
(that is, that it has a unit $1 \in A$). This assumption is not very
restrictive since, given any non-unital $C\sp{\ast}$-algebra $A_0$,
the algebra with adjoint unit $A = A_0^+ := A_0 \oplus \CC$ has a
unique $C\sp{\ast}$-algebra norm. For any unital $C\sp{\ast}$-algebra
$A$ and any $a \in A$, we denote by $\Spec_A(a)$ the {\em spectrum} of
$a$ in $A$, defined by
\begin{equation*}
	\Spec_A(a) \, := \, \{\, \lambda \in\CC,\ \lambda - a \ \mbox{
          is not invertible in } \ A \, \}\,.
\end{equation*}
 is known that $\Spec_A(a)$ is, in fact, independent of the
 $C\sp{\ast}$-algebra $A$ \cite{Dixmier}.  (See next.) It is also
 known classically that $\Spec_A(a)$ is compact and non-empty, unlike
 in the case of unbounded operators \cite{Dixmier}.  For $A$
 non-unital, we let $\Spec(a) := \Spec_{A^+}(a)$.

We shall need the following general property of $C\sp{\ast}$-algebras
\cite{Dixmier}.

\begin{lemma}\label{lemma.general}
Let $A_1 \subset B$ be two $C\sp{\ast}$-algebras and $a \in A_1$ be
such that it has an inverse in $B$, denoted $a^{-1}$. Then $a^{-1} \in
A_1$.  In particular, the spectrum of $a$ is independent of the
$C\sp{\ast}$-algebra in which we compute it:
\begin{equation}\label{eq.spectrum}
	\Spec_{A_1}(a) \,  = \, \Spec_{B}(a) \, =: \, \Spec(a) \,.
\end{equation}
\end{lemma}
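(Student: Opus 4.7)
The plan is the classical spectral permanence argument, reduced to the self-adjoint case via the trick $a^{-1} = (a^*a)^{-1}a^*$. First I would unitize if necessary: replace $A_1 \subset B$ by their unitizations so that both algebras contain a common unit $1$, noting that unitization is canonical for $C^*$-algebras and that the invertibility of $a$ in $B$ corresponds to invertibility of $(a,0)$ in $B^+$ (one has to check this reduction makes sense but it is routine).

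The core step is to prove the statement when $a = a^*$ is self-adjoint. Here I would consider the commutative $C^*$-subalgebra $A_0 := C^*(a,1) \subset A_1 \subset B$ generated by $a$ and the unit. By Gelfand's theorem, $A_0 \cong C(X)$ with $X = \Spec_{A_0}(a)$ its maximal ideal space. I need the two Banach-algebra facts: (a) $\Spec_B(a) \subset \Spec_{A_1}(a) \subset \Spec_{A_0}(a)$ (smaller algebras can only have more non-invertible elements), and (b) $\partial_{\CC}\Spec_{A_1}(a) \subset \Spec_B(a)$, which is the standard boundary containment of spectra for subalgebra inclusions (proved by showing that a sequence $\lambda_n \to \lambda$ of resolvent points whose inverses blow up cannot lie in the resolvent set of $B$ either). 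Now the $C^*$-specific input kicks in: since $a$ is self-adjoint, $\Spec_{A_1}(a) \subset \RR$, so $\Spec_{A_1}(a)$ has empty interior in $\CC$ and therefore equals its own topological boundary. Combining with (b) gives $\Spec_{A_1}(a) \subset \Spec_B(a)$, and combining with (a) gives equality. In particular, if $a$ is invertible in $B$ then $0 \notin \Spec_{A_1}(a)$, hence $a^{-1}$, which is obtained by continuous functional calculus applied to $1/x$ on the compact set $\Spec_{A_0}(a)$ not containing $0$, lies in $A_0 \subset A_1$.

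Next, I would reduce the general (non-self-adjoint) case: if $a \in A_1$ is invertible in $B$, then so is $a^*$, hence so is the self-adjoint element $a^*a \in A_1$. By the self-adjoint case, $(a^*a)^{-1} \in A_1$, and then
\begin{equation*}
    a^{-1} \, = \, (a^*a)^{-1} a^* \, \in \, A_1 \, ,
\end{equation*}
as required. Applying this with $a$ replaced by $\lambda - a$ for each $\lambda \in \CC$ yields the equality $\Spec_{A_1}(a) = \Spec_B(a)$ of \eqref{eq.spectrum}.

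The main obstacle is the self-adjoint case, since the general case is a cheap consequence of it. Within that step, the delicate ingredient is the boundary-of-spectrum inclusion $\partial \Spec_{A_1}(a) \subset \Spec_B(a)$; this is a purely Banach-algebra statement independent of the $*$-structure, and is what forces us to exploit the $C^*$-feature that self-adjoint spectra are real in order to collapse $\Spec_{A_1}(a)$ onto its boundary. Everything else (unitization, reduction via $a^*a$, invocation of Gelfand/continuous functional calculus) is standard.
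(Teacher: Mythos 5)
The paper does not actually prove this lemma --- it is invoked as a classical property of $C^*$-algebras (spectral permanence) with a reference to \cite{Dixmier} --- so there is no in-paper argument to compare against. Your core argument is the standard textbook proof and is correct: reduce to the self-adjoint case via $a^{-1}=(a^*a)^{-1}a^*$ (a left inverse of an invertible element is the inverse), and in the self-adjoint case combine the Banach-algebra inclusion $\partial\,\Spec_{A_1}(a)\subset\Spec_B(a)$ with the $C^*$-fact that $\Spec_{A_1}(a)\subset\RR$ is a closed set with empty interior in $\CC$ and hence equals its own boundary. Two points need attention, one of which is a genuine error.

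First, the unitization preliminary is wrong as stated. If you pass from $B$ to $B^+=B\oplus\CC$, then $B$ sits inside $B^+$ as a proper closed two-sided ideal, and no element of a proper ideal is invertible; in particular $(a,0)$ is \emph{never} invertible in $B^+$, so invertibility of $a$ in $B$ does not ``correspond to'' invertibility of $(a,0)$ in $B^+$ --- it is destroyed by the construction. The correct reading of the lemma is that $B$ is unital and $1_B\in A_1$, which is the situation in the paper's application (Proposition \ref{prop.faithful2}, where $A_1=\rho(A)$ for $\rho$ a morphism of a unital algebra). If you want to avoid assuming $1_B\in A_1$, the right observation is that the hypothesis forces it: if $a\in A_1$ is invertible in $B$, then $a^*a\geq\varepsilon>0$, so the continuous function equal to $1$ on $\Spec_B(a^*a)$ and to $0$ at the isolated point $0$ vanishes at $0$ and hence defines, via the functional calculus, the element $1_B$ inside $C^*(a^*a)\subset A_1$. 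Second, a small ordering issue in the self-adjoint step: you conclude $\Spec_{A_1}(a)=\Spec_B(a)$ but then apply the functional calculus on $\Spec_{A_0}(a)$, for which you need $0\notin\Spec_{A_0}(a)$, not merely $0\notin\Spec_{A_1}(a)$. This is harmless --- run the same boundary argument directly for the inclusion $A_0\subset B$ (the spectrum of $a$ in $A_0$ is also real), which yields $\Spec_{A_0}(a)=\Spec_B(a)$ and squeezes $\Spec_{A_1}(a)$ in between --- but as written the functional-calculus step is not justified by what precedes it. With these two repairs your proof is complete.
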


We shall need the following remark on extensions of representations.

\begin{remark}\label{rem.ext}
Let $B$ be a $C\sp{\ast}$-algebra and $I \subset B$ be a closed
two-sided ideal. Recall from Proposition~2.10.4 in \cite{Dixmier} that
any representation $\pi : I \to \maL(\maH)$ extends to a unique
representation $\pi : B \to \maL(\maK) \subset \maL(\maH)$, $\maK =
\overline{\pi(I)\maH}$ (the closure is actually not needed by the
Cohen-Hewitt factorization theorem).  This extension is an instance of
the Rieffel induction \cite{rieffelInducedCstar} corresponding to $I$,
regarded as an $A$--$I$ bimodule.
\end{remark}

In particular, we shall use this remark in order to deal with non-unital
algebras as follows.

\begin{notation}\label{not.+}
{\normalfont Let $I$ be a $C\sp{\ast}$-algebra and let us denote by
  $I\rp := I$ if $I$ has a unit and by $I\rp := I\sp{+} := I \oplus
  \CC$ if $I$ does not have a unit.  Let $\chi_0 : I\sp{+} \to \CC$ be
  the canonical projection.  Then, if $\maF$ is a set of
  representations of $I$, we let $\maF\rp := \maF$ if $I$ has a unit
  and $\maF\rp := \maF \cup \{\chi_0\}$ if $I$ does not have a
  unit. By implicitly extending the representations of $I$ to $I\rp$,
  we have that $\maF\rp$ is a set of representations of $I\sp{+}$.}
\end{notation}

Using this notation, we have the following result.

\begin{proposition}\label{prop.faithful2}
Let $\maF$ be a faithful family of nondegenerate representations of a
$C\sp{\ast}$-algebra $A$. An element $a \in A\rp$ is invertible if,
and only if, $\phi(a)$ is invertible in $\maL(\maH_\phi)$ for all
$\phi \in \maF\rp$ and the set $\{ \|\phi(a)^{-1} \|, \phi \in \maF\rp
\}$ is bounded.
\end{proposition}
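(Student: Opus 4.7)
The plan is to handle the two implications separately. The forward implication is essentially formal, while the backward implication is the substantive one and proceeds by applying the spectral permanence result (Lemma \ref{lemma.general}) to the isometric direct-sum embedding provided by the faithfulness hypothesis.

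For the direction ($\Rightarrow$), suppose $a^{-1} \in A$ exists. Then for each $\phi \in \maF$, $\phi(a)\phi(a^{-1}) = \phi(a^{-1})\phi(a) = \phi(1)$, so $\phi(a)$ is invertible in $\maL(\maH_\phi)$ with inverse $\phi(a^{-1})$ (the required unitality of $\phi$ on the unit of $A$ is forced by the assumption that $\phi(a)$ lies in the invertibles of $\maL(\maH_\phi)$). The contractivity of any $*$-homomorphism of $C^*$-algebras then gives the uniform bound $\|\phi(a)^{-1}\| = \|\phi(a^{-1})\| \le \|a^{-1}\|$ for every $\phi \in \maF$.

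For the direction ($\Leftarrow$), I would assemble the direct-product $C^*$-algebra $B := \prod_{\phi \in \maF} \maL(\maH_\phi)$, consisting of uniformly bounded families $(T_\phi)_{\phi \in \maF}$ under the supremum norm and coordinatewise operations. The direct sum $\rho := \oplus_{\phi \in \maF} \phi$ is then a $*$-homomorphism $A \to B$ which, by the equivalence (i)$\Leftrightarrow$(iii) of Proposition \ref{prop.faithful1}, is isometric; hence $\rho(A)$ is a norm-closed $*$-subalgebra of $B$, i.e., a $C^*$-subalgebra. The hypothesis that $\sup \{\|\phi(a)^{-1}\| : \phi \in \maF\} < \infty$ is exactly what guarantees that the candidate element $c := (\phi(a)^{-1})_{\phi \in \maF}$ actually defines an element of $B$, and by construction $c$ is a two-sided inverse of $\rho(a)$ in $B$. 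Applying Lemma \ref{lemma.general} to the inclusion $\rho(A) \subset B$ now forces $c = \rho(a)^{-1} \in \rho(A)$, so there exists $b \in A$ with $\rho(b) = c$. The injectivity of $\rho$ applied to $\rho(ab) = \rho(ba) = \rho(1)$ then yields $ab = ba = 1$ in $A$.

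The only real subtlety is the passage from pointwise invertibility of the $\phi(a)$ in each $\maL(\maH_\phi)$ to invertibility of $\rho(a)$ inside the $C^*$-algebra $B$: without the uniform bound on $\|\phi(a)^{-1}\|$, the coordinatewise inverse merely lies in the unrestricted algebraic product and cannot be combined with spectral permanence. This boundedness condition is therefore both necessary (as the forward direction shows, with constant $\|a^{-1}\|$) and sufficient, and it is the crux of the proposition.
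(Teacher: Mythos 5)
Your proof is correct and follows essentially the same route as the paper: the forward direction via contractivity of $*$-morphisms, and the converse by assembling the coordinatewise inverse in the sup-normed product $B = \oplus_{\phi\in\maF}\maL(\maH_\phi)$ (possible precisely because of the uniform bound), applying Lemma \ref{lemma.general} to the $C^*$-subalgebra $\rho(A)\subset B$, and pulling back through the isometric isomorphism $A\simeq\rho(A)$ furnished by faithfulness. Your explicit appeal to (i)$\Leftrightarrow$(iii) of Proposition \ref{prop.faithful1} to see that $\rho(A)$ is closed is a slightly more careful justification of a step the paper passes over quickly, but the argument is the same.
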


\begin{proof} By replacing $A$ with $A\rp$, we may assume that
$A$ is unital. Since each $\phi \in \maF$ is nondegenerate, if $a$ is
  invertible, $\phi(a)$ also is invertible and
  $\|\phi(a)^{-1}\|=\|\phi(a^{-1})\|\leq \|a^{-1}\|$ is hence bounded.

Conversely, let $\rho$ be the direct sum of all the representations
$\phi \in \maF$, that is,
\begin{equation}\label{eq.def.rho}
	\rho \, := \, \oplus_{\phi \in \maF} \, \phi \, : \, A \,
        \longrightarrow \, \oplus_{\phi \in \maF}\,
        \mathcal{L}(H_\phi) \, .
\end{equation}
If $\|\phi(a)\|$ is invertible for all $\phi \in \maF$ and there
exists $M$ independent of $\phi$ such that $\|\phi(a)^{-1}\|\leq M$,
then $b := (\phi(a)^{-1})_{\phi \in \maF}$ is a well defined element
in $B := \oplus_{\phi\in\maF}\maL(\maH_\phi)$ and $b$ is an inverse
for $\rho(a)$ in $B$. Let $A_1 := \rho(A)$. Then $\rho(a) \in A_1$ is
invertible in $B$. Then observe that since $\rho$ is continuous,
injective, and surjective morphism of $C\sp{\ast}$-algebras, it
defines an isomorphism of algebras $A \to A_1$. We then conclude that
$a$ is invertible in $A$ as well.
\end{proof}

The following is a converse of the above proposition.  Recall that $a
\in A$ is called {\em normal} if $aa\sp{*} = a\sp{*}a$.

\begin{proposition}\label{prop.faithful3}
Let $\maF$ be a family of representations of a unital
$C\sp{\ast}$-algebra $A$ with the following property:
\begin{quotation}
 {\em \dlp If $a \in A$ is such that $\phi(a)$ is invertible in
  $\maL(\maH_\phi)$ for all $\phi \in \maF$ and the set $\{\,
  \|\phi(a)^{-1} \|, \, \phi \in \maF \, \}$ is bounded, then $a$ is
  invertible in $A$.\drp}
\end{quotation}
Then the family $\maF$ is faithful.
\end{proposition}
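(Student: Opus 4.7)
The plan is to argue by contradiction, using the characterization of faithful families given by Proposition \ref{prop.faithful1}. Suppose $\maF$ is not faithful; then the direct sum $\rho := \oplus_{\phi \in \maF}\, \phi$ has nonzero kernel. Picking $a_0 \in \ker(\rho)$ with $a_0 \neq 0$ and replacing it by $a := a_0^* a_0$ (which is positive and has norm $\|a_0\|^2 \neq 0$), we obtain a positive nonzero element $a \in A$ with $\phi(a) = 0$ for every $\phi \in \maF$.

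The key step is then to manufacture, from $a$, an element that contradicts the hypothesis of the proposition. I would set $\lambda := \|a\| > 0$ and consider $b := \lambda \cdot 1_A - a$. Since $a$ is positive of norm $\lambda$, we have $\lambda \in \Spec(a)$, so that $b$ is \emph{not} invertible in $A$. On the other hand, for every $\phi \in \maF$,
\begin{equation*}
	\phi(b) \, = \, \lambda \cdot 1_{\maH_\phi} - \phi(a) \, = \, \lambda \cdot 1_{\maH_\phi} \,,
\end{equation*}
which is invertible in $\maL(\maH_\phi)$ with $\|\phi(b)^{-1}\| = \lambda^{-1}$, a bound independent of $\phi$. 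The hypothesis of the proposition would then force $b$ to be invertible in $A$, a contradiction. Hence $\ker(\rho) = 0$ and $\maF$ is faithful.

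The argument is almost routine once the element $b$ is identified; the only subtlety I foresee is the implicit unitality convention. For the identity $\phi(\lambda \cdot 1_A - a) = \lambda \cdot 1_{\maH_\phi}$ to hold, one needs $\phi(1_A) = 1_{\maH_\phi}$ for every $\phi \in \maF$. This is the natural reading of ``invertible in $\maL(\maH_\phi)$'' in the statement: a non-unital $\phi$ would make $\phi(a)$ land in the proper corner $\phi(1_A)\maL(\maH_\phi)\phi(1_A)$ and could never be invertible in all of $\maL(\maH_\phi)$, so the hypothesis of the proposition would be vacuous in that case. Under the natural convention, the proof is complete.
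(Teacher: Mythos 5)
Your proof is correct and is essentially the paper's own argument: both extract a nonzero positive element $a$ annihilated by every $\phi\in\maF$ and observe that $\lambda - a$, for a nonzero $\lambda\in\Spec(a)$, satisfies the hypothesis of the proposition yet is not invertible. Your choice $\lambda=\|a\|$ (which indeed lies in the spectrum of a positive element) and your use of $\ker\bigl(\oplus_{\phi\in\maF}\phi\bigr)$ in place of the open-set characterization from Proposition \ref{prop.faithful1}(ii) are only cosmetic differences.
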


\begin{proof} 
Clearly, the family $\maF$ is not empty, since otherwise all elements
of $A$ would be invertible, which is not possible.  Let us assume, by
contradiction, that the family $\maF$ is {\em not} faithful.  Then, by
Proposition \ref{prop.faithful1}(ii), there exists a non-empty open
set $V \subset \Prim(A)$ that does not intersect $\cup_{\phi \in \maF}
\, \supp(\phi)$. Let $J \subset A$, $J \neq 0$, be the (closed)
two-sided ideal corresponding to $V$, that is, $V = \Prim(J)$. Since
$\maF$ is non-empty, we have $J \neq \Prim(A)$.  Then every $\phi \in
\maF$ is such that $\phi = 0$ on $J$. Let $a \in J$, $a \neq 0$. By
replacing $a$ with $a^*a \in J$, we can assume $a \ge 0$. Let $\lambda
\in \Spec(a)$, $\lambda \neq 0$. Such a $\lambda$ exists since $a$ is
normal and non-zero. Let $c := \lambda - a$.  Then, for any $\phi \in
\maF$, $\phi(c) = \lambda \in \CC$ is invertible and $\|\phi(c)^{-1}\|
= \lambda^{-1}$ is bounded.  However, $c$ is not invertible (in any
$C\sp{\ast}$-algebra containing it) since it belongs to the
non-trivial ideal $J$.
\end{proof}

Recall that $\maC_0(X)$ is the set of continuous functions on $X$ that
have vanishing limit at infinity. Then $\maC_0(X)$ is a commutative
$C\sp{\ast}$-algebra, and all commutative $C\sp{\ast}$-algebras are of
this form.

\begin{example} \label{ex.commutative}
Let $\mu_\alpha$, $\alpha \in I$, be a family of positive, regular
Borel measures on a locally compact space $X$. Let $\phi_\alpha$ be
the corresponding multiplication representation of the
$C\sp{\ast}$-algebra $\maC_0(X) \to \maL(L^2(X, \mu_\alpha))$.  Wee
have $\supp(\phi_\alpha) = \supp(\mu_\alpha)$ and the family $\maF :=
\{ \phi_\alpha, \alpha \in I\}$ is faithful if, and only if,
$\cup_{\alpha \in I} \supp(\mu_\alpha)$ is dense in $X$. In
particular, if each $\mu_\alpha$ is the Dirac measure concentrated at
some $x_\alpha \in X$, then $\phi_\alpha(f) = f(x_\alpha) =:
\ev_{x_\alpha}(f) \in \CC$ and $\supp(\mu_\alpha) = \{x_\alpha \}$. We
shall henceforth identify $x_\alpha \in X$ with the corresponding
evaluation irreducible representation $ev_{x_\alpha}$.  Then we have
that
\begin{equation*}
 \maF = \{\ev_{x_\alpha}, \, \alpha \in I \} \ \mbox{ is faithful
 }\ \Leftrightarrow \ \{x_\alpha,\, \alpha \in I \}\ \mbox{ is dense
   in }\ X \;.
\end{equation*}
This example extends right away to $C\sp{\ast}$algebras of the form
$\maC_0(X; \maK)$ of functions with values compact operators on some
given Hilbert space.
\end{example}

We conclude our discussion of faithful families with the following
result. We denote by $\overline \cup S_\alpha := \overline{\cup_\alpha
  S_\alpha}$ the closure of the union of the family of sets
$S_\alpha$.

\begin{proposition}\label{prop.faithful4}
Let $\maF$ be a family of representations of a unital
$C\sp{\ast}$-algebra $A$. Then $\maF$ is faithful if, and only if, for
any normal $a\in A$,
\begin{equation}\label{eq.spec.eq-faithful}
	\Spec(a) = \overline{\cup}_{\phi \in \maF} \, 
	\Spec(\phi(a))\,.
\end{equation}
\end{proposition}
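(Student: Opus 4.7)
The plan is to prove each implication by contraposition, using the earlier Propositions \ref{prop.faithful1}, \ref{prop.faithful2}, and the standard spectral radius formula for normal elements.

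For the inclusion $\overline{\cup}_{\phi \in \maF}\Spec(\phi(a)) \subset \Spec(a)$, which holds regardless of faithfulness, I would simply note that every $*$-morphism sends invertibles to invertibles, so $\Spec(\phi(a)) \subset \Spec(a)$ for each $\phi$, and $\Spec(a)$ is already closed. The substance of the forward direction is the reverse inclusion. Assuming $\maF$ is faithful and $a$ is normal, I would argue by contradiction: suppose $\lambda \in \Spec(a)$ but $\lambda \notin \overline{\cup}_{\phi}\Spec(\phi(a))$. Then $\epsilon := \mathrm{dist}(\lambda, \cup_{\phi}\Spec(\phi(a))) > 0$. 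For each $\phi \in \maF$, the element $\phi(a) - \lambda$ is invertible in $\maL(\maH_\phi)$, and since $\phi(a)$ is normal, the norm of its resolvent is exactly the reciprocal of the distance to the spectrum:
\begin{equation*}
\|(\phi(a) - \lambda)^{-1}\| \, = \, \mathrm{dist}(\lambda, \Spec(\phi(a)))^{-1} \, \le \, \epsilon^{-1}.
\end{equation*}
Thus the set $\{\|\phi(a-\lambda)^{-1}\| : \phi \in \maF\}$ is bounded by $\epsilon^{-1}$, so Proposition \ref{prop.faithful2} forces $a - \lambda$ to be invertible in $A$, contradicting $\lambda \in \Spec(a)$.

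For the reverse implication, I would assume the spectral identity holds for all normal $a$ and show $\maF$ is faithful using Proposition \ref{prop.faithful1}(ii). Suppose $\maF$ were not faithful; then there exists a non-trivial closed two-sided ideal $J \subset A$ with $\phi|_J = 0$ for every $\phi \in \maF$ (as in the proof of Proposition \ref{prop.faithful3}). Pick any nonzero $a \in J$; after replacing $a$ by $a^*a$ we may assume $a \ge 0$ and $a \neq 0$, so there exists $\lambda > 0$ in $\Spec(a)$. Set $c := \lambda \cdot 1 - a$, which is self-adjoint, hence normal. Since $\lambda \in \Spec(a)$, we have $0 \in \Spec(c)$, so $0 \in \Spec(c)$. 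On the other hand, $\phi(c) = \lambda \cdot 1$ for every $\phi \in \maF$ (because $a \in J \subset \ker\phi$), so $\Spec(\phi(c)) = \{\lambda\}$ and therefore $\overline{\cup}_{\phi \in \maF}\Spec(\phi(c)) \subset \{\lambda\}$, which does not contain $0$. This contradicts the assumed identity applied to the normal element $c$.

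The main obstacle is the forward direction: passing from the abstract boundedness criterion of Proposition \ref{prop.faithful2} to the concrete geometric statement about spectra requires the identity $\|(T-\lambda)^{-1}\| = 1/\mathrm{dist}(\lambda,\Spec(T))$, which is valid precisely because $\phi(a)$ inherits normality from $a$. This is exactly where the hypothesis that $a$ be normal is used, and it also explains why the statement fails for arbitrary (non-normal) $a$, since one loses control of resolvent norms in terms of spectral distances.
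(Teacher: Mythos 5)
Your proof is correct, and the forward direction is essentially the paper's argument: reduce to the question of invertibility of $a-\lambda$, observe that normality gives the resolvent bound $\|(\phi(a)-\lambda)^{-1}\|=\operatorname{dist}(\lambda,\Spec(\phi(a)))^{-1}$, and invoke Proposition \ref{prop.faithful2}. In fact your write-up is slightly more careful than the paper's at one point: you take $\epsilon$ to be the distance from $\lambda$ to the \emph{union} $\cup_{\phi}\Spec(\phi(a))$, which is what actually yields a bound uniform in $\phi$, whereas the paper phrases the bound in terms of the ($\phi$-dependent) distance to $\Spec(\phi(a))$. For the converse the two arguments diverge mildly: the paper applies the spectral identity directly to normal elements $a\in J$ to get $\Spec(a)=\{0\}$, hence $a=0$, and then kills a general element of $J$ by splitting it into its normal real and imaginary parts; you instead follow the template of Proposition \ref{prop.faithful3}, producing a single self-adjoint witness $c=\lambda-a$ with $0\in\Spec(c)$ but $\Spec(\phi(c))=\{\lambda\}$ for all $\phi$. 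Both routes rest on the same observation that every $\phi\in\maF$ annihilates a nontrivial ideal; yours avoids the real/imaginary decomposition step, at the cost of implicitly using that the representations are unital (so that $\phi(\lambda\cdot 1-a)=\lambda\cdot 1_{\maH_\phi}$ has spectrum $\{\lambda\}$) --- the same convention the paper itself uses in Proposition \ref{prop.faithful3}, so this is not a gap.
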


\begin{proof} 
Let us assume first that the family $\maF$ is faithful and that $a$ is
normal. Since we have that $\Spec(\phi_0(a)) \subset \Spec(a)$ for any
representation $\phi_0$ of $A$, it is enough to show that $\Spec(a)
\subset \overline{\cup}_{\phi \in \maF} \, \Spec(\phi(a))$.  Let us
assume the contrary and let $\lambda \in \Spec(a) \smallsetminus
\overline{\cup}_{\phi \in \maF} \, \Spec(\phi(a))$. By replacing $a$
with $a - \lambda$, we can assume that $\lambda = 0$. We thus have
that $\phi(a)$ is invertible for all $\phi \in \maF$, but $a$ is not
invertible (in $A$). Moreover, $\|\phi(a)^{-1}\| \le \delta^{-1}$,
where $\delta$ is the distance from $\lambda =0$ to the spectrum of
$\phi(a)$, by the properties of the functional calculus for normal
operators. This is however a contradiction by Proposition
\ref{prop.faithful2}, which implies that $a$ must be invertible in $A$
as well.

To prove the converse, let us assume that $\Spec(a) \subset
\overline{\cup}_{\phi \in \maF} \, \Spec(\phi(a))$, for all normal
elements $a \in A$. Let $J$ be a non-trivial (closed {\em
  selfadjoint}) two-sided ideal on which all the representations $\phi
\in \maF$ vanish. We have to show that $J=0$, which would imply that
$\maF$ is faithful. Let $a \in J$ be a normal element. Then $\Spec(a)
\subset \overline{\cup}_{\phi \in \maF} \, \Spec(\phi(a)) =
\{0\}$. Since $a$ is normal we deduce $a=0$ and hence $J$ has no
normal element other than $0$. Then, for any $a \in J$, we can write
$a=1/2(a+a^*)+1/2(a-a^*)$, the sum of two normal elements in $J$
because $J$ is selfadjoint. Therefore $1/2(a+a^*)=1/2(a-a^*)=0$, and
hence $a=0$ and $J=0$.
\end{proof}

We refer to \cite{ASkandalis2, BuneciSurvey, LNGeometric, frascatti,
  RenaultBook, Vassout} for background material on groupoids. The
following is well known, but is useful in order to set up the
terminology and to introduce some concepts to be used below.

\begin{example}\label{ex.groupoid}
Let $\maG$ be a locally compact groupoid with units $M$ and with Haar
system $(\lambda_x)$, $x \in M$. If $d : \maG \to M$ denotes the
domain map $\maG \to M$, then we denote $\maG_A := d\sp{-1}(A)$, $A
\subset M$, and $\maG_x := d\sp{-1}(x)$, $x \in M$. We recall that
$\lambda_x$ has support $\maG_x$ (and is right invariant and
continuous in a natural sense). The {\em regular representation}
$\pi_x$ of $C\sp{\ast}(\maG)$ then acts on $L\sp{2}(\maG_x,
\lambda_x)$ by left convolution. Let $\maR := \{\pi_x,\, x \in M\}$ be
the set of regular representations of $C\sp{\ast}(\maG)$, the
$C\sp{\ast}$-algebra associated to $\maG$. Let $I$ be the intersection
of all the kernels of the representations $\pi_x$. Then the set $\maR$
is a faithful set of representations of $C_r\sp{*}(\maG) \simeq
C\sp{\ast}(\maG)/I$, the reduced $C\sp{\ast}$-algebra of $\maG$. In
general, $\maR$ will not be a faithful family of representations of
$C_r\sp{*}(\maG)$, unless the canonical projection $C\sp{*}(\maG) \to
C_r\sp{\ast}(\maG)$ is an isomorphism.
\end{example}

\section{Exhausting and strictly norming  families}

Let us notice that Example \ref{ex.commutative} shows that the `sup'
in the relation $\|a\| = \sup_{\phi \in \maF} \, \|\phi(a)\|$
(Proposition \ref{prop.faithful1}) may not be attained. It also shows
that the closure of the union in Equation \eqref{eq.spec.eq-faithful}
is needed. Sometimes, in applications, one does obtain however the
stronger version of these results (that is, that the sup is attained
and that the closure is not needed), see \cite{DamakGeorgescu,
  GeorgescuNistor2}, for example. Moreover, the condition that the
norms of $\phi(a)^{-1}$ be uniformly bounded (in $\phi$) for any fixed
$a \in A$ is inconvenient and often not needed in applications. For
this reason, we introduce now a new class of sets of representations
of $A$, the class of \dlp exhausting sets of representations,\drp\ a
class that has some additional properties. The concept of an
exhausting set of representations turns out to be closely related to
the concept of an \dlp invertibility sufficient set of
representations\drp, introduced by Roch \cite{Roch}, which we discus
first.

\subsection{Invertibility sufficient sets of representations}
We now recall the concepts of invertibility sufficient and strictly
norming families of representations \cite{ExelInvGr, Roch}. See also
\cite{RochBookLimit, RochBookNGT}.

\begin{definition}[Roch] \label{def.invSufficient}
Let $\maF$ be a set of representations of a {\em unital}
$C\sp{\ast}$-algebra $A$.
\begin{enumerate}[(i)]
\item We shall say that $\maF$ is {\em invertibility sufficient} if 
\begin{equation*}
 \dlp \; a \in A \mbox{ is invertible } \Leftrightarrow\ \phi(a)
 \mbox{ is invertible for any } \phi \in \maF \,.\; \drp
\end{equation*}
\item We shall say that $\maF$ is {\em strictly norming} if, for any
  $a \in A$, there exists $\phi \in \maF$ such that $\|a\| =
  \|\phi(a)\|$.
\end{enumerate}
\end{definition}

\begin{example} \label{ex.irred}
By classical results \cite{Dixmier}, the set of all irreducible
representations of a $C\sp{\ast}$-algebra is strictly norming. A proof
of this well-known fact is contained in \cite{ExelInvGr}. See also
Theorem \ref{thm.full}.
\end{example}

The classes of invertibility sufficient and strictly norming sets of
representations actually coincide (see Theorem \ref{thm.full}
below). Before discussing that result, however, we need to extend the
above definitions to the non-unital case.

\begin{remark}\label{rem.nonunital}
Using the notation introduced in \ref{not.+}, we obtain then the following 
form of the definition of an 
invertibility sufficient family:
\begin{quotation}
 \dlp The family $\maF$ is {\em invertibility sufficient} if $1+ a \in
 A\sp{+} := A \oplus \CC$, $a\in A$, is invertible if, and only if, $1
 + \phi(a)$ is invertible for any $\phi \in \maF$.\drp
\end{quotation}
Similarly, the definition of a strictly norming family becomes:
\begin{quotation}
 \dlp $\maF$ is {\em strictly norming } if, for any $a \in A$ and
 $\lambda \in \CC$, either there exists $\phi \in \maF$ such that
 $\|\lambda + a\| = \|\lambda + \phi(a)\| $ or $\|\lambda + a \| =
 |\lambda|$.\drp
\end{quotation}
\end{remark}

The following result was proved in the unital case in \cite{Roch}.
See also \cite{ExelInvGr}.

\begin{theorem}[Roch]\label{thm.full}
Let $\maF$ be a set of non-degenerate representations of a unital
$C\sp{\ast}$-algebra $A$. Then $\maF$ is strictly norming if, and only
if, it is invertibility sufficient.
\end{theorem}

\begin{proof}
The unital case was proved already. If $A$ does not have a unit, then
we simply replace $A$ with $A\rp$ and $\maF$ with $\maF\rp$ (see the
notation introduced in \ref{not.+}) to reduce to the unital case.
\end{proof}

Clearly, an invertibility sufficient family of representations will
consist only of non-degenerate representations, but this is not true 
of a strictly norming family. 

We now give some examples of how exhausting and strictly norming sets
of representations are useful for invertibility questions. The
following characterization of Fredholm operators is a consequence of
the definitions.

\begin{corollary}\label{cor.Fredholm}
Let $1 \in A \subset \maL(\maH)$ be a sub-$C\sp*$-algebra of bounded
operators on the Hilbert space $\maH$ containing the algebra of
compact operators on $\maH$, $\maK = \maK(\maH)$. Let $\maF$ be an
invertibility sufficient family of representations of $A/\maK$. We
then have the following characterization of Fredholm operators $a \in
A$:
\begin{quotation}
  $a \in A$ is Fredholm if, and only if, $\phi(a)$ is invertible in
  for all $\phi \in \maF$.
\end{quotation}
\end{corollary}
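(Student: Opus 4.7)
The plan is to reduce the Fredholm statement to Theorem \ref{thm.full} applied to the quotient $C^*$-algebra $A/\maK$. I would assemble three ingredients in order: Atkinson's theorem, the fact that $A/\maK$ embeds in the Calkin algebra, and the invariance of invertibility under $C^*$-subalgebra inclusions (Lemma \ref{lemma.general}).

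First, I would invoke Atkinson's theorem: since $\maK = \maK(\maH)$, an operator $a \in \maL(\maH)$ is Fredholm if, and only if, its image in the Calkin algebra $\maL(\maH)/\maK$ is invertible. Because $\maK \subset A$ by hypothesis, the quotient map induces an embedding $A/\maK \hookrightarrow \maL(\maH)/\maK$ of unital $C^*$-algebras. By Lemma \ref{lemma.general}, invertibility in $A/\maK$ is equivalent to invertibility in the larger algebra $\maL(\maH)/\maK$. Hence $a$ is Fredholm if, and only if, the class $a + \maK$ is invertible in $A/\maK$.

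Next, I would identify each $\phi \in \maF$, viewed as a representation of $A/\maK$, with the representation $\phi \circ q$ of $A$, where $q: A \to A/\maK$ is the quotient map; this is the identification tacitly used in writing ``$\phi(a)$'' when $a \in A$, since $\phi(a) = \phi(a + \maK)$. Applying Theorem \ref{thm.full} to the unital $C^*$-algebra $A/\maK$ and the family $\maF$ then gives that $a + \maK$ is invertible in $A/\maK$ if, and only if, $\phi(a + \maK)$ is invertible in $\maL(\maH_\phi)$ for every $\phi \in \maF$. Combining this with Atkinson's theorem yields the desired characterisation.

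The only real subtlety, and the main point worth flagging, is that Theorem \ref{thm.full} requires $\maF$ to be \emph{exhausting}, whereas the corollary as printed says \emph{faithful}. Under a merely faithful hypothesis one would be forced to pass through Proposition \ref{prop.faithful2} instead, and would also need to assume the uniform bound $\sup_{\phi} \|\phi(a+\maK)^{-1}\| < \infty$, which does not appear in the conclusion. So the statement is most naturally read with ``exhausting'' in place of ``faithful'', and under that reading the proof is just the three-step reduction above.
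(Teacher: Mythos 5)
Your proof is correct and is exactly the reduction the paper intends: the paper offers no written proof, saying only that the corollary ``is a consequence of the above theorem'' (Theorem \ref{thm.full}), and your three-step argument --- Atkinson's theorem, the embedding $A/\maK \hookrightarrow \maL(\maH)/\maK$ combined with Lemma \ref{lemma.general} to get that $a$ is Fredholm if and only if $a+\maK$ is invertible in $A/\maK$, and then Theorem \ref{thm.full} applied to the unital algebra $A/\maK$ --- is the natural way to fill that in. Your flag about the hypothesis is also well taken: since the corollary is explicitly derived from Theorem \ref{thm.full}, which characterises \emph{exhausting} families, the word ``faithful'' in the statement should be read as ``exhausting'' (under a merely faithful family one would have to route through Proposition \ref{prop.faithful2} and the conclusion would then require the uniform bound $\sup_{\phi}\|\phi(a+\maK)^{-1}\|<\infty$, which is absent), exactly as you observe.
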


The following proposition is the analog of Proposition
\ref{prop.faithful4} in the framework of strictly norming families.

\begin{theorem}\label{thm.full2}
Let $\maF$ be a family of representations of a unital
$C\sp{\ast}$-algebra $A$. Then $\maF$ is invertibility sufficient if,
and only if, for any $a \in A$,
\begin{equation}\label{eq.spec.eq-full}
	\Spec(a) \, = \, \cup_{\phi \in \maF} \, \Spec(\phi(a)) \,.\,
\end{equation}
\end{theorem}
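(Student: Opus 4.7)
The plan is to deduce both directions of the equivalence directly from Theorem \ref{thm.full}, which is the special case $\lambda = 0$ of the spectral statement we want. The observation that makes everything work is that $a$ is invertible in $A$ if and only if $0 \notin \Spec(a)$, and similarly for $\phi(a)$; so the present theorem is essentially Theorem \ref{thm.full} applied to the shifted elements $a - \lambda$ as $\lambda$ ranges over $\CC$.

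First I would record the easy inclusion $\Spec(\phi(a)) \subset \Spec(a)$, valid for any $*$-morphism $\phi$ of unital $C^*$-algebras: if $(\lambda - a)^{-1}$ exists in $A$, then $\phi((\lambda - a)^{-1})$ is an inverse of $\lambda - \phi(a)$ in $\maL(\maH_\phi)$. This gives $\cup_{\phi \in \maF} \Spec(\phi(a)) \subset \Spec(a)$ unconditionally.

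For the forward direction, assume $\maF$ is exhausting and let $\lambda \in \Spec(a)$. Then $a - \lambda$ is not invertible in $A$, so by Theorem \ref{thm.full} there exists $\phi \in \maF$ such that $\phi(a - \lambda) = \phi(a) - \lambda$ is not invertible in $\maL(\maH_\phi)$, i.e.\ $\lambda \in \Spec(\phi(a))$. This yields the reverse inclusion and hence \eqref{eq.spec.eq-full}. For the converse, assume \eqref{eq.spec.eq-full} for all $a \in A$. By Theorem \ref{thm.full} it suffices to show that whenever $\phi(a)$ is invertible in $\maL(\maH_\phi)$ for every $\phi \in \maF$, the element $a$ is invertible in $A$. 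But the hypothesis forces $0 \notin \Spec(\phi(a))$ for all $\phi \in \maF$, so $0 \notin \cup_{\phi \in \maF} \Spec(\phi(a)) = \Spec(a)$, and $a$ is invertible. Hence $\maF$ is exhausting.

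There is no serious obstacle in this argument — the whole theorem is just Theorem \ref{thm.full} read ``parametrically in $\lambda$''. What is worth emphasizing in the writeup is the contrast with Proposition \ref{prop.faithful4}: for exhausting families neither the closure on the right-hand side of \eqref{eq.spec.eq-full} nor the normality hypothesis on $a$ is needed, and this is precisely the extra strength gained by passing from ``faithful'' to ``exhausting''.
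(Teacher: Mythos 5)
Your proof is correct and follows essentially the same route as the paper: both directions are reduced to Theorem \ref{thm.full} via the observation that $\lambda \in \Spec(a)$ iff $a-\lambda$ is not invertible, with the trivial inclusion $\Spec(\phi(a)) \subset \Spec(a)$ handling one containment. The only cosmetic difference is that you argue the forward direction directly while the paper phrases it by contradiction.
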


\begin{proof}  
Let us assume first that the family $\maF$ is invertibility
sufficient.  We proceed in analogy with the proof of Proposition
\ref{prop.faithful4}.  Since we have that $\Spec(\phi_0(a)) \subset
\Spec(a)$ for any representation $\phi_0$ of $A$, it is enough to show
that $\Spec(a) \subset \cup_{\phi \in \maF} \, \Spec(\phi(a))$.  Let
us assume the contrary and let $\lambda \in \Spec(a) \smallsetminus
\cup_{\phi \in \maF} \, \Spec(\phi(a))$. By replacing $a$ with $a -
\lambda$, we can assume that $\lambda = 0$. We thus have that
$\phi(a)$ is invertible for all $\phi \in \maF$, but $a$ is not
invertible (in $A$), contradicting the assumption that $\maF$ is
invertibility sufficient.

To prove the converse, let us assume that $\Spec(a) \subset \cup_{\phi
  \in \maF} \, \Spec(\phi(a))$ for all $a \in A$. Let us assume that
$a \in A$ and that $\phi(a)$ is invertible for all $\phi \in
\maF$. Then $0 \notin \cup_{\phi \in \maF} \, \Spec(\phi(a))$. Since
$\Spec(a) \subset \cup_{\phi \in \maF} \, \Spec(\phi(a))$, we have
that $0 \notin \Spec(a)$, and hence $a$ is invertible. Thus the family
$\maF$ is invertibility sufficient.
\end{proof}

\subsection{Exhausting families of representations}
It is not always easy to check that a family of representations is
invertibility sufficient (or strictly norming, for that matter). For
this reason, we introduce a slightly more restrictive class of
families of representations, the class of exhausting families of
representations. It is convenient to do this for ideals first.

\begin{definition}\label{def.exhausting.ideals}
 Let $A$ be a $C\sp{\ast}$ algebra, possibly without unit, and 
 let $\maI$ a set of (closed, two-sided) ideals $I \subset A$. 
 We say that $\maI$ is {\em exhausting} if, by definition, for 
 any irreducible representation $\pi$ of $A$, there exists
 $I \in \maI$ such that $I \subset \ker(\pi)$.
\end{definition}

We shall typically work with families of representations $\maF$. 
We consider, nevertheless, the case of families of morphisms
as well. We thus have the following closely related definition.

\begin{definition} \label{def.exhausting}
 Let $\maF$ be a set of morphisms $\phi: A \to B_{\phi}$ of a 
 (not necessarily unital) $C\sp{\ast}$-algebra $A$. The algebras
 $B_\phi$ are not fixed. We shall say that $\maF$ is {\em
  exhausting} if the family of ideals $\{\ker(\phi), \phi \in \maF\}$
  is exhausting. Similarly, a set of unitary equivalence classes
  of representations $\maF$ of $A$ is exhausting if the corresponding
  set of kernels is exhausting.
\end{definition}

The following simple remark is sometimes useful.

\begin{remark} \label{rem.simple}
 Let $\phi$ be a representation of $A$. Recall that $\supp(\phi)$ is the 
 set of primitive ideals of $A$ that contain $\ker(\phi)$. 
 Moreover, $\ker(\phi)$ depends only on the unitary equivalence
 class of $\phi$. We then see that $\maF$ is exhausting if, and only if, 
 $\Prim(A) = \cup_{\phi \in \maF}\, \supp(\phi)$.
\end{remark}

Recall that we denote by $A\rp := A$ if $A$ has a unit and $A\rp :=
A\sp{+} := A \oplus \CC$, the algebra of adjoint unit, if $A$ does not
have a unit.

\begin{proposition}
Let $A$ be a possibly non-unital $C\sp{\ast}$-algebra and let $\maF$
be a family of representations of $A$. We denote by $\maF\rp = \maF$
if $A$ has a unit and by $\maF\rp := \maF \cup \{\chi_0\}$, where
$\chi_0 : A\rp = A \oplus \CC \to \CC$ is the canonical projection (as
in \ref{not.+}).  Then we have
\begin{enumerate}[(i)]
  \item $\maF$ is an exhausting set of representations of $A$ 
  if, and only if, $\maF\rp$ is an exhausting set of representations of $A\rp$.
  \item $\maF$ is an invertibility sufficient set of representations
    of $A$ if, and only if, $\maF\rp$ is an invertibility sufficient
    set of representations of $A\rp$.
\end{enumerate}
\end{proposition}

\begin{proof} 
To prove (i), we only need to consider the case when $A$ does not have
a unit. The result then follows from Remark \ref{rem.simple} and from
the relation $\Prim(A\rp) = \Prim(A\sp{+}) =
\Prim(A) \cup \{\ker(\chi_0)\}$, where, we recall, $\ker(\chi_0) =
A$. The other statement is really the corresponding definitions.
\end{proof}

\begin{remark}
 Let $\maF_i$, $i = 1,2$, be two families of representations of
 $A$. Let denote by $\maI_i := \{ \ker(\phi), \phi \in \maF_i \}$. We
 assume that $\maI_1 = \maI_2$.  Then the families $\maF_i$ are at the
 same time exhausting or not. The same is true for the properties of
 being strictly norming, or invertibility sufficient. So these
 properties are really properties of a family of ideals of $A$ rather
 than of families of representations of $A$. Nevertheless, it is
 customary to work with families of representation rather than
 families of ideals. In the same way, we can consider the analogous
 properties of families of {\em morphisms} of $C\sp{\ast}$-algebras.
\end{remark}

Let us record the following simple facts, for further use.

\begin{proposition}\label{prop.full.strictly.norming}
Let $\maF$ be a set of representations of a $C\sp{\ast}$-algebra. If
$\maF$ is exhausting, then $\maF$ is invertibility preserving and
hence also strictly norming. If $\maF$ is strictly norming, then it is
also faithful.
\end{proposition}

\begin{proof} Let $A$ be the given $C\sp{\ast}$-algebra. 
Let us prove first that any exhausting family $\maF$ is strictly
norming. Indeed, let $a \in A\rp$.  Then there exists an irreducible
representation $\pi$ of $A\rp$ such that $\|\pi(a)\| = \|a\|$
\cite{Dixmier}. Unless $1 \notin A$ and $\pi = \chi_0$, where $\chi_0
: A\rp = A \oplus \CC \to \CC$ is the projection, there will exist
$\phi \in \maF$ such that $\pi \in \supp(\phi)$. Then, as in the proof
of (ii)$\Rightarrow$(iii) in Proposition \ref{prop.faithful1}, we have
that $\|a \| = \|\pi(a)\| \le \|\phi(a)\| \le \|a\|$. Hence
$\|\phi(a)\| = \|a\|$. On the other hand, if $1 \notin A$ and $\pi =
\chi_0$, then let $a = \lambda + a_0$, with $\lambda \in \CC$ and $a_0
\in A$. Then $\|\lambda + a_0\| = \|a\| = \|\pi(a)\| = |\lambda|$.
Since any strictly norming family is invertibility preserving, by
Theorem \ref{thm.full}, the first part of the proposition follows.

Let us prove first that any strictly norming family $\maF$ is
faithful. Indeed, let us consider the representation $\rho :=
\oplus_{\phi \in \maF} \, \phi : A \, \to \, \oplus_{\phi \in \maF} \,
\mathcal{L}(H_\phi).$ By the definition of a strictly norming family
of representations, the representation $\rho$ is isometric. Therefore
it is injective and consequently $\maF$ is faithful.
\end{proof}

We summarize the above Proposition in
\begin{equation*}
  \maF \mbox{ exhausting } \Rightarrow 
  \maF \mbox{ strictly norming  } \Rightarrow 
  \maF \mbox{ faithful.} 
\end{equation*}

In the next two examples we will see that there exist faithful
families that are not strictly norming and strictly norming families
that are not exhausting.

\begin{example} \label{ex.commutative2}
We consider again the framework of Example \ref{ex.commutative} and
consider only families of {\em irreducible representations}. Thus $A =
\maC_0(X)$, for a locally compact space $X$. The irreducible
representations of $A$ then identify with the points of $X$, since $X
\simeq \Prim(A) = \hat A$. A family $\maF$ of {\em irreducible}
representations of $A$ thus identifies with a subset $\maF \subset
X$. We then have that a family $\maF \subset X$ of irreducible
representations of $A = \maC_0(X)$ is faithful if, and only if, $\maF$
is {\em dense} in $X$. On the other hand, a family of irreducible
representations of $A = \maC_0(X)$ is exhausting if, and only if,
$\maF = X$.
\end{example}

The relation between exhausting and strictly norming families is not
so simple. We begin with the following remark on the above example.

\begin{remark}\label{rem.metrisable}
If in Example \ref{ex.commutative2} $X$ is moreover {\em metrisable},
then every strictly norming family $\maF \subset X$ is also
exhausting, because for any $x \in X$, there exists a compactly
supported, continuous function $\psi_x : X \to [0, 1]$ such that
$\psi_x(x) = 1$ and $\psi_x(y) < 1$ for $y \neq x$ (we can do that by
arranging that $\psi_x(y) = 1 - d(x, y)$, for $d(x, y)$ small, and use
the Tietze extension theorem.  In general, however, it is not true
that any strictly norming family is exhausting. Indeed, let $I$ be an
uncountable set and $X = [0, 1]^I$. Let $x \in X$ be arbitrary, then
the family $\maF := X \smallsetminus \{x\}$ is strictly norming but is
not exhausting. Indeed, let $f : X \to [0, 1]$ be a continuous
function such that $f(x) = 1$.  Since $f$ depends on a countable
number of variables, the set $\{f = 1\}$ will not be reduced to $x$
alone. See also Theorem \ref{thm.2nd.count}.
\end{remark}

We conclude this subsection with the following result that
is relevant for the next subsection. See also \cite{RochBookNGT}
and the comment at the end of this subsection. The results in that
book can be used to give a quick proof of the following results for
invertibility sufficient families (which
are essentially contained in that book). For the benefit of the reader,
we include nevertheless the short, direct proofs, since we are also
interested in exhausting families.

\begin{proposition}\label{prop.ideal}
Let $I \subset A$ be an ideal of a $C\sp{\ast}$-algebra. Let $\maF_I$
be a set of nondegenerate representations of $I$ and $\maF_{A/I}$ be a
set of representations of $A/I$.  Let $\maF := \maF_I \cup
\maF_{A/I}$, regarded as a family of representations of $A$. If
$\maF_I$ and $\maF_{A/I}$ are both exhausting, then $\maF$ is also
exhausting. The same result holds by replacing exhausting with
strictly norming.
\end{proposition}

\begin{proof}
 We have that $\Prim(A)$ is the disjoint union of $\Prim(I)$ and
 $\Prim(A/I)$.  Since $\cup_{\phi \in \maF_I} \supp(\phi) \subset
 \Prim(I)$ and $\cup_{\phi \in \maF_{A/I}} \supp(\phi) \subset
 \Prim(A/I)$, the result about exhausting families follows from the
 definition.
 
 Let us assume that both $\maF_I$ and $\maF_{A/I}$ are strictly
 norming and let $a \in A$.  We may assume that $A$ is unital.  We
 want to show that $\maF$ is also strictly norming, that is, that
 there exists $\phi \in \maF_I \cup \maF_{A/I}$ such that $\|a\| =
 \|\phi(a)\|$.  By replacing $a$ with $a\sp{*}a$, we can assume that
 $a \ge 0$.  Since $\maF_{A/I}$ is strictly norming, there is $\phi
 \in \maF_{A/I}$ such that $\|a + I\|_{A/I} = \| \phi(a)\|$.  If $\|a
 + I\|_{A/I} = \|a\|$, we are done.  Otherwise, let $\psi$ be a
 continuous function on $\Spec(a)$ that is zero on $\Spec_{A/I}(a +
 I)$ and such that $\psi(\|a\|) = \|a\|$ and $\psi(t) \le t$ for $t
 \ge 0$.  Then $\psi(a) \in I$ and $\|\psi(a)\| = \|a\|$.  Since the
 family $\maF_{I}$ is strictly norming, there exists $\phi \in \maF_I$
 such that
 $$
  \|a\| \, = \, \|\psi(a)\| \, = \, \|\phi(\psi(a))\| \, = \,
 \|\psi(\phi(a))\| \, \le \, \|\phi(a)\|
 $$
This shows that the family $\maF$ is strictly norming.
\end{proof}

We have the following consequence that is sometimes useful in
applications.

\begin{corollary}\label{cor.ideal}
 Let $I \subset A$ be a two-sided ideal in a $C\sp{\ast}$-algebra $A$.
 Let $\maF$ be an invertibility preserving family of representations 
 of $I$. Then $a \in A$ is invertible if, and only if, $a$ is
 invertible in $A/I$ and $\phi(a)$ is invertible for all 
 $\phi \in \maF$.
\end{corollary}

\begin{proof}
 Since $\maF$ is an invertibility preserving set of representations of
 $I$, it consists of non-degenerate representations, which will hence
 extend uniquely to $A$. Let $\pi$ be an isometric representation of
 $A/I$. The result then follows from Proposition \ref{prop.ideal}
 applied to $\maF_I := \maF$ and $\maF_{A/I} := \{\pi\}$.
\end{proof}

Results closely related to Proposition \ref{prop.ideal} and Corollary 
\ref{cor.ideal} were obtained in \cite{RochBookNGT} under the name of 
``lifting theorems.'' See especially Section 6.3 of that book. The results in 
that book were typically obtained in a more general general setting:
often using ideals in a Banach algebra and sometimes using even general ideals (and 
morphisms). The interested reader should consult that book as well.

\subsection{Groupoid algebras and the Effros-Hahn conjecture}
We now show how one can check in the framework of locally compact
groupoids (with additional properties) that certain families of
representations are exhausting, thus generalizing some results of
\cite{ExelInvGr}.

We refer to the Example \ref{ex.groupoid} and, especially, to the
references quoted before that example, for notations and results
pertaining to groupoids. In particular, we shall denote by $d$ and $r$
the domain and range maps of a groupoid $\maG$ and by $\maG_x\sp{x} :=
d\sp{-1}(x) \cap r\sp{-1}(x)$ the {\em isotropy group} of $x$. This is
the group of arrows (or morphisms) of $\maG$ that have domain and
range equal to the unit $x$. Also, we continue to denote by $\maR :=
\{\pi_y, y \in M\}$ the set of regular representations of a locally
compact groupoid $\maG$ with Haar system and units $M$. Recall that we
denote $\maG_A := d\sp{-1}(A)$, $A \subset M$, and $\maG_x :=
d\sp{-1}(x)$, $x \in M$.

We shall say that a locally compact groupoid $\maG$ with a Haar system
{\em has the  generalized Effros-Hahn property} if every primitive ideal of
$C\sp{\ast}(\maG)$ is induced from an isotropy subgroup $\maG_y\sp{y}$
of $\maG$ \cite{ionescuWilliamsEHC, renault91}. (This should not be
confused with the variouis \dlp EH {\em induction}
properties\drp\ introduced in \cite{EchterhoffWilliams08}.)  We shall
write $\Ind_{y}\sp{\maG}(\sigma)$ for the induced representation of
$C\sp{\ast}(\maG)$ from the representation $\sigma$ of $\maG_y\sp{y}$.
If $\maG$ has the  generalized Effros-Hahn property and all the isotropy groups
$\maG_y\sp{y}$, $y \in M$ are amenable, we say that $\maG$ is {\em
  EH-amenable}.

\begin{theorem}\label{thm.sufficient}
Let $\maG$ be a locally compact groupoid with a Haar system and units
$M$. If $\maG$ is EH-amenable, then the family $\maR := \{\pi_y, y \in
M\}$ of regular representations of $C\sp{\ast}(\maG)$ is exhausting.
In particular, the family $\maR$ is invertibility sufficient and the
canonical map $C\sp{\ast}(\maG) \to C\sp{\ast}_r(\maG)$ is an
isomorphism.
\end{theorem}

\begin{proof} 
Let $I$ be any primitive ideal of $C\sp{\ast}(\maG)$.  Then $I$ is
induced from the isotropy group $\maG_y\sp{y}$, $y \in M$, by the
assumption that $\maG$ has the  generalized Effros-Hahn property.  Since
$\maG_y\sp{y}$ is amenable, every irreducible representation of
$\maG_y\sp{y}$ is weakly contained in the regular representation
$\rho_y$ of $\maG_y\sp{y}$.  But $\Ind_{y}\sp{\maG}(\rho_y)$ is the
regular representation $\pi_y$ of $C\sp{\ast}(\maG)$ on
$L\sp{2}(\maG_y)$. Since induction preserves the weak containment of
representations (see Proposition 6.26 of \cite{rieffelInducedCstar}),
we obtain that $I$ contains $\ker(\pi_y)$.  This proves that the
family $\maR := \{\pi_y, y \in M\}$ is exhausting.  Therefore $\maR$
is also faithful, and hence $C\sp{\ast}(\maG) \simeq
C_r\sp{\ast}(\maG)$ (see Example \ref{ex.groupoid}).  The family
$\maR$ is invertibility sufficient since it is exhausting (see
Proposition \ref{prop.full.strictly.norming}).
\end{proof}

We then obtain the following consequence.

\begin{theorem}\label{thm.amenable}
Let $\maG$ be a locally compact groupoid with a Haar system and units
$M$.  If $\maG$ is Hausdorff, second countable, and (topologically)
amenable, then the family $\maR := \{\pi_y, y \in M\}$ is exhausting.
\end{theorem}

\begin{proof}
Since $\maG$ is an amenable, Hausdorff, second countable, locally
compact groupoid with a Haar system, we have that $\maG$ satisfies the
Effros-Hahn conjecture by the main result in
\cite{ionescuWilliamsEHC}, that is, it has the  generalized Effros-Hahn property.
Since $\maG$ is amenable, all its isotropy groups $\maG_x\sp{x}$ are
amenable \cite{RenaultAnantBook}. The result then follows from Theorem
\ref{thm.sufficient}.
\end{proof}

This result extends a result of \cite{ExelInvGr}, who considered the
case of etale groupoids. Let $\maG$ be a locally compact groupoid with
a Haar system and units $M$. We notice, however, that the family $\maR
:= \{\pi_y, y \in M \}$ of regular representations of the reduced
$C\sp{\ast}$-algebra $C_r\sp{\ast}(\maG)$ of $\maG$ is not exhaustive
in general, as can be seen from the following example.

\begin{remark}
 Let $G$ be the free group on two generators and let $K_n \subset G$,
 $n \in \NN$, be decreasing sequence of normal subgroups of $G$ of
 finite index with $\cap_{n=1}\sp{\infty} K_n = \{ 1 \}$. Let us
 consider the family of groups $\maG := \cup_n \{n\} \times G/K_n$, with
 $n \in \NN \cup \{\infty\}$ and $K_{\infty} := \{ 1\}$. It is a
 groupoid with units $\NN \cup \{\infty\}$. Its domain and range map
 are equal and equal to the projection onto the first component. The
 topology on $\maG_{\NN} := d\sp{-1}(\NN)$, the restriction of $\maG$
 to $\NN$, is discrete. A basis of the system of neighborhoods of
 $(\infty, g)$ is given by the sets $\{(n, gK_n), n \ge N\}$, where $N
 \ge 1$ is arbitrary ($g \in G$). We have that the trivial
 representation of $G$ defines a representation $\chi$ of
 $C\sp{\ast}(\maG)$ supported at $\{\infty\}$.  The trivial
 representation of $G$ is the limit of the trivial representations of
 $G/K_n$, so it descends to a representation of
 $C_r\sp{\ast}(\maG)$. However, the trivial representation of $G$ is
 not contained in the support of any of the representations
 $\lambda_n$, $n \in \NN \cup \{\infty\}$, since $G$ is not amenable.
 Thus the family of regular representations $\lambda_n$, $n \in \NN
 \cup \{\infty\}$ is not exhaustive.  This example is due to
 Voiculescu and it answers (in the negative) a question of Exel
 \cite{ExelInvGr}.
\end{remark}

We are ready to prove now that the class of EH-amenable groupoids is
closed under extensions and that suitable ideals and quotients of
EH-amenable groupoids are also EH-amenable.

\begin{proposition}\label{prop.EHexact}
Let $\maG$ be a locally compact groupoid with a Haar system and units
$M$.  Let $U \subset M$ be an open $\maG$-invariant subset and $F := M
\smallsetminus U$.  We have that $\maG$ is EH-amenable if, and only
if, $\maG_F$ and $\maG_U$ are EH-amenable.
\end{proposition}

\begin{proof}
It is clear that the isotropy groups $\maG_x\sp{x}$ of $\maG$ are
given by the isotropy groups of the restrictions $\maG_F$ and
$\maG_U$. This gives that all the isotropy groups of $\maG$ are
amenable if, and only if, the same property is shared by all the
isotropy groups of the restrictions $\maG_F$ and $\maG_U$.

Let us turn now to proving the induction property for the primitive
ideals.  We need the following general fact.  Let $A$ be a
$C\sp{\ast}$-algebra and $J \subset A$ be a two-sided ideal, then we
have that $\Prim(A)$ is the disjoint union of $\Prim(J)$ and
$\Prim(A/J)$ \cite{Dixmier}. This correspondence sends a primitive
ideal $I$ of $A$ to $I \cap J$, if $I \cap J \neq J$, and otherwise
(i.e. if $J \subset I$) it sends $I$ to $I/J$, which is an ideal of
$A/J$.

We shall use this correspondence as follows.  Let $I$ be primitive
ideal of $C\sp{\ast}(\maG)$. Since $C\sp{\ast}(\maG_U)$ is an ideal of
$C\sp{\ast}(\maG)$ and $C\sp{\ast}(\maG)/C\sp{\ast}(\maG_U) \simeq
C\sp{\ast}(\maG_F)$, by a result of Renault \cite{RenaultBook,
  renault91}, we have that $I$ corresponds uniquely to either a
primitive ideal of $C\sp{\ast}(\maG_F)$ or to a primitive ideal of
$C\sp{\ast}(\maG_U)$. We shall consider these two cases
separately. Anticipating, the first case will correspond to induced
representations from isotropy groups $\maG_y\sp{y}$ with $y \in F := M
\smallsetminus U$ and the second case will correspond to induced
representations from isotropy groups $\maG_y\sp{y}$ with $y \in U$.
We first notice that the restriction of the induced representation
$\Ind_{y}\sp{\maG}(\sigma)$ of $C\sp{\ast}(\maG)$ (induced from the
representation $\sigma$ of $\maG_y\sp{y}$) restricts to a non-zero
representation of $C\sp{\ast}(\maG_U)$ if, and only if, $y \in U$.

Let us then consider a primitive ideal $I \supset C\sp{\ast}(\maG_U)$
of $C\sp{\ast}(\maG)$ and $I/C\sp{\ast}(\maG_U)$ the corresponding
ideal of $C\sp{\ast}(\maG_F) \simeq
C\sp{\ast}(\maG)/C\sp{\ast}(\maG_U)$. Then $I$ is induced from the
irreducible representation $\sigma$ of $\maG_y\sp{y}$ if, and only if,
$y \in F$ and $I/C\sp{\ast}(\maG_U)$ is induced from the irreducible
representation $\sigma$ of $\maG_y\sp{y}$.  This follows directly from
the definition of induced representations \cite{rieffelInducedCstar};
in fact, the inducing module is the same for both ideals.

On the other hand, if the primitive ideal $I$ of $C\sp{\ast}(\maG)$
does not contain $C\sp{\ast}(\maG_U)$, then again we notice that $I$
is induced from the irreducible representation $\sigma$ of
$\maG_y\sp{y}$ if, and only if, $y \in U$ and $I \cap
C\sp{\ast}(\maG_U)$ is induced from the irreducible representation
$\sigma$ of $\maG_y\sp{y}$. This again follows from the results in
\cite{rieffelInducedCstar}, more precisely, from Induction in Stages
Theorem 5.9 of that paper. Indeed, extending non-degenerate
representations of an ideal to the whole algebra is a particular case
of induction in stages (see the Remark \ref{rem.ext}). The inductions
modules are again the same.
\end{proof}

\section{Topology on the spectrum and strictly norming  families}

Let us discuss now in more detail the relation between the concept of
invertibility sufficient family and the simpler (to check) concept of
an exhausting family. 
The following theorem studies $C\sp{\ast}$-algebras with the 
property that every invertibility sufficient family
is also exhausting.
It explains Example \ref{ex.commutative2} and
Remark \ref{rem.metrisable}.

\begin{lemma}\label{lemma.preliminary} 
Let $A$ be a $C\sp{\ast}$-algebra, $J$ a two-sided ideal, and $\pi$ a
representation of $A$ such that $\pi$ is nondegenerate on $J$. Also let
$a\in A$, $0\leq a \leq 1$, such that $\|\pi(a)\|=1$ and choose $\eta>0$. Then
there exists $c\in J$, $c \ge 0$, $\|c\|\leq \eta$ such that
$\|\pi(a+c)\|\geq 1+\eta/2$.
\end{lemma}

\begin{proof}
For any fixed $\varepsilon >0$ there exists a unit vector $\xi$ such
that $\langle \pi(a)\xi,\xi \rangle \geq 1-\varepsilon$. Let us
consider then the positive linear form $\varphi : A \to \CC$ defined
by $\varphi(b)\colon =\langle \pi(b)\xi,\xi \rangle$. 
%
%
If $(u_\lambda)$ is an approximate unit in $J$, then
\begin{equation*}
  \|\varphi\| \geq\| \varphi|_J \| \, = \, \lim \varphi(u_\lambda) \,
  = \, \| \xi \| =1 \;.
\end{equation*}
So $\|\varphi|_J \|= \|\phi\| = 1$. Hence there exists $c_0 \in J$,
$c_0 \ge 0$, $\|c_0\| = 1$, such that $\varphi(c_0)\geq
1-\varepsilon$. We then set $c=\eta c_0$ and indeed, for $\varepsilon$
small enough
\begin{equation*}
  \|a+c\| \geq \varphi(a+c)\, \geq \,
  1-\varepsilon+\eta(1-\varepsilon) \geq 1+\eta/2 \;.
\end{equation*}
This completes the proof.
\end{proof}

We shall use the above lemma in the form of the following
corollary.

\begin{corollary}\label{cor.countable}
Let $\pi_0$ be an irreducible representation of a $C\sp{\ast}$-algebra
$A$ and let $I_0 := \ker (\pi_0) \in \Prim(A)$. We assume that we
are given  decreasing sequence $V_0 \supset \ldots \supset V_n \supset V_{n+1} \ldots $ 
of open neighborhoods of $I_0$ in $\Prim(A)$. Then there
exists $a \in I_0$ such that $\|a\|=\|\pi_0(a)\| =1$ and $\| \pi(a)\|
\le 1 - 2^{k}$ for any irreducible representation $\pi$ such that
$\ker(\pi) \notin V_k$.
\end{corollary}

\begin{proof}
To construct $a \in A$ with the desired properties, let us consider
the ideals $J_n$ defining the sets $V_n$, that is, $V_n =
\Prim(J_n)$, $n \ge 0$. Since $V_n \subset V_{n-1}$ for all $n$,
we have that $J_n \subset J_{n-1}$ for all $n$.

The element $a$ we are looking for will be the limit of a
sequence $(a_n)$, $a_n \in A$, where the $a_n$ are defined 
inductively to satisfy the following properties:
\begin{enumerate}[(i)]
\item $0 \le a_n \le 1$;
\item $\|\pi_0(a_n)\| = 1$;
\item $\|\pi(a_n)\| \le 1 - 2^{-k}$ for all irreducible
  representations $\pi$ such that $\ker(\pi) \in \Prim(A)
  \smallsetminus \Prim(J_k)$ for $k=0, 1, \ldots, n$;
\item $\|a_n - a_{n-1}\| \le 2^{2-n}$ for $n \ge 1$.
\end{enumerate}

We define the initial term $a_0$ as follows. We first choose $b_0 \in J_0$ 
such that $0 \le b_0$, and $\pi_0(b_0) \neq 0$. By rescaling $b_0$ with a 
positive factor, we can assume that $\|\pi_0(b_0)\| = 1$. Let then $\chi_0 :
[0, \infty) \to [0, 1]$ be the continuous function defined by
  $\chi_0(t) = t$ for $t \le 1$ and $\chi_0(t) = 1$ for $t \ge
  1$. Then we define $a_0 = \chi_0(b_0)$. Conditions (i--iv) are then
  satisfied

Next, $a_{n}$ is defined in terms $a_{n-1}$. In order to do that,  
we first define auxiliary elements $c_n$ and $b_n =
a_{n-1} + c_n$ as follows. By Lemma \ref{lemma.preliminary}, there exists
$c_n \in J_n$, $c_n \ge 0$, $\|c_n\| \le 2^{1-n}$, such that
$\|\pi_0(b_n) \| \ge 1 + 2^{-n}$.  Let then $\chi_n : [0, \infty) \to
  [0, 1]$ be the continuous function defined by $\chi_n(t) = t$ for $t
  \le 1-2^{1-n}$, $\chi_n$ linear on $[1-2^{1-n}, 1]$ and on $[1, 1 +
    2^{-n}]$, $\chi_n(1) = 1 - 2^{-n}$, and $\chi_n(t) = 1$ for $t \ge 1
  + 2^{-n}$. Then we define $a_n = \chi_n(b_n)$.
\medskip

\noindent {\em Claim.}  The sequence $a_n \in A$ just constructed
satisfies conditions (i--iv). 
\medskip

\noindent Indeed, we have checked our conditions for $n=0$,
so let us assume $n \ge 1$ and check our conditions for $a_n \in A$ one by one:
\medskip

\noindent (i) We have that $a_{n-1}, c_n \ge 0$, hence $b_n := a_{n-1}
+ c_n \ge 0$.  Since $0 \le \chi_n \le 1$, we obtain that $0 \le a_n
:= \chi_n(b_n) \le 1$.
\medskip

\noindent (ii) Since $0 \le \chi_n \le 1$,
$\chi_n(t) = 1$ for $t \ge 1 + 2^{-n}$, and 
$\|\pi_0(b_n) \| \ge 1 + 2^{-n}$, we have that $\|\pi_0(a_n)\| =
\|\pi_0(\chi_n(b_n)) \| = \| \chi_n(\pi_0(b_n)) \| = 1$.
\medskip

\noindent (iii) Let $\pi \in \hat A$ be such that $\ker(\pi) \in
\Prim(J_k)^c := \Prim(A) \smallsetminus \Prim(J_k)$, for some $k$, 
$0 \le k \le n$.  We need to check that $\|\pi(a_n)\| \le 1 - 2^{-k}$.

We have tat $\pi$ vanishes on $J_k$, and hence
$\pi(c_n) = 0$ since $c_n \in J_n \subset J_k$, $k \le n$. Therefore,
\begin{equation*}
	\pi(a_n) = \pi(\chi_n(b_n)) = \chi_n(\pi(b_n)) =
        \chi_n(\pi(a_{n-1})) \,.
\end{equation*}
We shall consider now two cases: $k < n$ and 
$k = n$. 
\medskip

\noindent {\em Case 1.}\
If $k < n$, then $\|\pi(a_{n-1}) \| \le 1 - 2^{-k} \le 1 - 2^{1-n}$,
by the induction hypothesis.
Since $\chi_n(t) = t$ for $t \le 1 - 2^{1-n}$, we obtain 
$\pi(a_n) = \chi_n(\pi(a_{n-1})) = \pi(a_{n-1})$, and hence
$\| \pi(a_n) \| = \| \pi(a_{n-1}) \| \le 1 - 2^{-k}$ for $k <n$. 
\smallskip

\noindent {\em Case 2.}\ If $k = n$,  we have
$\|\pi(a_n) \| = \|\chi_n(\pi(a_{n-1}))\| \le 1 - 2^{-n} = 1 - 2\sp{-k}$,
since $\pi(a_n) = \chi_n(\pi(a_{n-1}))$, $\chi_n
(t) \le 1 - 2^{-n}$ for $t \le 1$, and $0 \le a_{n-1} \le 1$.
\medskip

\noindent (iv) We have $\|b_n\| \le \|a_{n-1}\| + \| c_n\| \le 1 +
2^{1-n}$. Since $|\chi_n(t) - t | \le 2^{1 -n}$ for all $t \le 2\sp{1-n}$, we have
$\|a_{n} - b_n\| \le 2^{1-n}$. Hence
\begin{equation*}
  \|a_n - a_{n-1}\|  \le \| a_n -
  b_{n} \| + \| b_n - a_{n-1} \| \\
  \le 2^{1 -n} + \|c_n\| \le 2^{2 -n}.
\end{equation*}
This completes the proof of our claim, and hence the sequence 
$a_n$ constructed above satisfies Conditions (i-iv).
\medskip

Let us now show how to use the fact that the sequence $a_n \in A$
satisfies Conditions (i-iv) to construct $a$ as in the statement of
this corollary. First of all, Condition (iv) allows us to define $a :=
\lim_{n \to \infty} a_n$. Let us show that $a \in A$ satisfied the
desired conditions. Since Conditions (i--iii) are compatible with
limits, we have
\begin{enumerate}[(i)]
\item $0 \le a \le 1$;
\item $\|\pi_0(a)\| = 1$;
\item $\|\pi(a)\| \le 1 - 2^{-k}$ for all irreducible representations
$\pi$ such that $\ker(\pi) \in \Prim(A) \smallsetminus
  \Prim(J_k)$ for $k \ge 0$.
\end{enumerate}

Thus $a$ has the properties stated in this corollary, which completes
the proof.
\end{proof}

\begin{proposition}\label{prop.2ndcount} 
Let $A$ be a unital $C\sp{\ast}$-algebra. Let us assume that every $I
\in \Prim(A)$ has a countable base for its system of
neighborhoods. Then every strictly norming family $\maF$ of
representations of $A$ is also exhausting.

Let us assume that $\Prim(A)$ is a $T_1$ space. Then the converse
is also true, that is, if every strictly norming family $\maF$ of representations
of $A$ is also exhausting, then every $I \in \Prim(A)$ has a countable
base for its system of neighborhoods.
\end{proposition}

We think that the condition that $\Prim(A)$ be $T_1$ is not necessary.
However, as noticed by Roch, the proof below requires this assumption.

\begin{proof} 
Let us prove first the first part of the statement, so let us assume
that every primitive ideal $I \in \Prim(A)$ has a countable base for
its system of neighborhoods and let $\maF$ be a strictly norming
family of representations of $A$. We need to show that $\maF$ is
exhausting.  We shall proceed by contradiction. Thus, let us assume
that the family $\maF$ is not exhausting. Then there exists a
primitive ideal $I_0 = \ker (\pi_0) \in \Prim(A) \smallsetminus
\cup_{\phi \in \maF} \, \supp(\phi)$. Let
\begin{equation*}
	V_0 \supset \ldots \supset V_n \supset V_{n+1} \ldots \supset
        \{I_0\} = \cap_k V_k
\end{equation*}
be a basis for the system of neighborhoods of $I_0$ in $\Prim(A)$. We
may assume without loss of generality that that the neighborhoods
$V_n$ consist of open sets. 
Corollary \ref{cor.countable} then yields $a \in A$ such that
$\|a\|=\|\pi_0(a)\| =1$, but $\| \pi(a)\| \le 1 - 2^{k}$ for any
irreducible representation $\pi$ of $A$ such that $\ker(\pi) \in \Prim(A)
\smallsetminus V_k$. Then, for every $\phi \in \maF$, we have that
\begin{equation*}
  \Prim(A) \smallsetminus \supp(\phi) \, = \, 
  \{ I \in \Prim(A), \ker(\phi) \not\subset I \} \, = \,
  \Prim(\ker(\phi))
\end{equation*}
is an open subset of $\Prim(A)$ containing $I_0$,
and hence it is a neighborhood of $I_0$ in
$\Prim(A)$. Therefore there exists $n$ such that $V_n \subset \Prim(A)
\smallsetminus \supp(\phi)$ and hence $\|\pi(a)\| \le 1 - 2^{-n}$ for
all $\pi$ such that $\ker(\pi) \in \supp(\phi)$. This gives
$\|\phi(a)\| \le 1-2^{-n} < 1$, thus contradicting the fact that
$\maF$ is strictly norming. This proves the first half of the
statement.

Let us prove the converse, that is, the second half of the statement,
which is easier. Thus let us assume that every strictly norming family
of representations of $A$ is also exhausting and let us prove that
every primitive ideal $I_0 := \ker(\pi_0) \in \Prim(A)$ has a
countable basis for its system of neighborhoods. Let us fix then $I_0
:= \ker(\pi_0) \in \Prim(A)$ arbitrarily and show that it has a
countable basis for its system of neighborhoods.  Also, we associate
to each primitive ideal $I \in \Prim(A)$ an irreducible representation
$\phi_I$ with kernel $I$.  By remark \ref{rem.simple}, we
have that the family of representations $\maF := \{ \phi_I, I \in Prim(A), I \neq I_0\}$ 
is not exhausting, since  $\Prim(A)$ is a $T_1$ space (and hence
its points are closed) and hence $\supp{\phi_I} = I$. 
By our assumption, the family $\maF$  is hence
also not strictly norming. Therefore, by the definition of a strictly
norming family of representations, there exists $a \in A$, such that $\|\pi(a)\| < \|a\|$
for all irreducible $\pi$ with $\ker(\pi) \neq I_0$. Note that since
the family $\widehat{A}$ is strictly norming (see Example
\ref{ex.irred}), we have that $\|a\| = \max_{\pi \in \widehat{A}}
\|\pi(a)\|$, and hence $\|a\| = \|\pi_0(a)\|$. By rescaling, we can
assume $\|a\| = \|\pi_0(a)\| = 1$. Then the sets
\begin{equation*}
 V_n \, := \, \{ \, \ker(\pi) \in \Prim(A), \, \|\pi(a)\| > 1 - 2^{-n} \, \}
\end{equation*}
are open neighborhoods of $I_0 := \ker(\pi_0)$ in $\Prim(A)$ by Lemma
\ref{lemma.usc}. Let us show that they form a basis for the system of
neighborhoods of $I_0$.  Indeed, let $G$ be an arbitrary open subset
of $\Prim(A)$ containing $I_0$. Then there exists a two-sided ideal $J
\subset A$ such that $G = \Prim(J)$. The set of irreducible
representations of $A/J$ identifies with $\Prim(J)^c := \Prim(A)
\smallsetminus \Prim(J)$, and hence it does not contain $\pi_0$. Hence
$\|\pi(a)\|<1$ for all $\pi \in \Prim(A/J)$. Since $\widehat{A/J}$ is
a strictly norming family of representations of $A/J$, we obtain that
$\|a + J\|_{A/J} < 1$ (the norm is in $A/J$). Let $n$ be such that
$\|a + J \|_{A/J} \le 1 - 2^{-n}$. Then $V_n \subset \Prim(J) = G$,
which completes the proof of the second half of this theorem. The
proof is now complete.
\end{proof}

Clearly, there are $C\sp{\ast}$-algebras for which the spectrum is not 
$T_1$, but for which every strictly norming family of representations is
also exhausting. We do not know, however, if the converse result is true
in full generality (that is, for every $C\sp{\ast}$-algebra). 
It is easy to show that separable $C\sp{\ast}$-algebras satisfy the
assumptions of Proposition \ref{prop.2ndcount}.

\begin{theorem}\label{thm.2nd.count} Let $A$ be a 
  separable $C\sp{\ast}$-algebra. Then every primitive ideal $I \in
  \Prim(A)$ has a countable base for its system of
  neighborhoods. Consequently, if $\maF$ is a strictly norming set of
  representations of $A$, then $\maF$ is exhausting.
\end{theorem}

\begin{proof} It is known \cite{Dixmier} that $\Prim(A)$ is 
second countable. This gives the result in view of Proposition \ref{prop.2ndcount}.
For the benefit of the reader, we now provide a quick proof that every point in
$\Prim(A)$, for $A$ separable, has a countable base for its system of neighborhoods.
Indeed, we can replace $A$ with $A\sp{+}$ and thus assume 
that $A$ is unital.  Let $\{a_n\}$ be a dense subset of $A$ and fix
$I_0 := \ker(\pi_0) \in \Prim(A)$. Define
\begin{equation*}
	V_n \, := \, \{ \, \ker(\pi) \in \Prim(A), \ \|\pi(a_n)\| >
        \|\pi_0(a_n)\|/2 \, \} \, .
\end{equation*}
Then each $V_n$ is open by Lemma \ref{lemma.usc}. We claim that $V_n$
is a basis of the system of neighborhoods of $I_0 := \ker(\pi_0)$ in
$\Prim(A)$.  Indeed, let $G \subset \Prim(A)$ be an open set
containing $I_0$. Then $G = \Prim(J)$ for some two-sided ideal of $A$
such that $\pi_0 \neq 0$ on $J$.  Let $a \in J$ such that $\pi_0(a) \neq
0$. By the density of the sequence $a_n$ in $A$, we can find $n$ such
that $\|a - a_n\|< \|\pi_0(a)\|/4$.  Then $\|\pi'(a) - \pi'(a_n)\| <
\|\pi_0(a)\|/4$ for any irreducible representation $\pi'$, and hence
\begin{equation}\label{eq.ineq}
	\|\pi'(a)\| - \|\pi_0(a)\|/4 < \| \pi'(a_n) \| < \|\pi'(a)\| +
        \|\pi_0(a)\|/4 \, , \quad \forall \pi' \in \hat A \, .
\end{equation}
To show that $V_n \subset G$, it is enough to show that $V_n \cap G^c
= V_n \cap \Prim(J)^c = \emptyset$.  Suppose the contrary and let $\pi
\in \hat A$ be such that $\ker(\pi) \in V_n \cap \Prim(J)^c$.  Then
$\|\pi(a_n)\| > \|\pi_0(a_n)\|/2$, by the definition of
$V_n$. Moreover, $\pi(a) = 0$ since $a \in J$ and $\pi$ vanishes on
$J$. Let us show that this is not possible. Indeed, using Equation
\eqref{eq.ineq} twice, for $\pi ' = \pi_0$ and for $\pi' = \pi$, we
obtain
\begin{equation*}
	\frac{3}{8} \|\pi_0(a)\| \, < \, \frac{1}{2} \|\pi_0(a_n)\| \,
        < \, \|\pi(a_n)\| \, < \frac{1}{4} \|\pi_0(a)\| \,,
\end{equation*}
which is contradiction. Consequently $V_n \subset G$ and hence
$\{V_n\}$ is a basis for the system of neighborhoods of $\pi_0$ in
$\Prim(A)$, as claimed. The last part follows from the first 
part of Proposition \ref{prop.2ndcount}.
\end{proof}

The next two basic examples illustrate the differences between the
notions of faithful and strictly norming families.

\begin{example}\label{matrix}
Let in this example $A$ be the $C\sp{\ast}$-algebra of continuous
functions $f$ on $[0,1]$ with values in $M_2(\mathbb{C})$ such that
$f(1)$ is diagonal, which is a type I $C\sp{\ast}$-algebra, and thus
we identify $\hat A$ and $\Prim(A)$. Then the maps $\ev_t \colon
f\mapsto f(t) \in M_2(\CC)$, for $t<1$, together with the maps
$\ev_1^{i}\colon f \mapsto f(1)_{ii}$ ($i=0,1$) provide all the
irreducible representations of $A$ (up to equivalence). The family
\begin{equation*}
 \maF \, = \, \{\, ev_t,\,,t<1 \, \} \, \cup \, \{\ev_1^1 \, \}
\end{equation*}
is a faithful but not exhausting family. In fact the function
$t\mapsto {\scriptstyle \begin{pmatrix}1&0\\0&1-t\end{pmatrix}}$ is
not invertible in $A$ but $\pi(f)$ is invertible for all
$\pi\in\maF$. Of course, in this example, every $\pi \in \hat A =
\Prim(A)$ has a countable base for its system of neighborhoods, so
every strictly norming family of representations $\maF$ of $A$ is also
exhausting.
\end{example}

The next example is closely related to the examples we will be dealing
with below.
\begin{example} \label{toeplitz}
Let $\maT$ be the Toeplitz algebra, which is again a type I
$C\sp{\ast}$-algebra, and thus we again identify $\widehat \maT$ and
$\Prim(\maT)$. The Toeplitz algebra $\maT$ is defined as the
$C\sp{\ast}$-algebra generated by the operator defined by the
unilateral shift $S$. (Recall that $S$ acts on the Hilbert space
$L^2(\mathbb{N})$ by $S\colon \epsilon_k \mapsto \epsilon_{k+1}$.) As
$S^*S = 1$ and $SS^*-1$ is a rank $1$ operator, one can prove that the
following is an exact sequence
\begin{equation*} 
  0 \to \mathcal{K} \to \maT \to \maC(S^1)\to 0 \,,
\end{equation*} 
where $\mathcal{K}$ is the algebra of compact operator. Extend the
unique irreducible representation $\pi$ of $\mathcal{K}$ to $\maT$ as
in \cite{Dixmier}. Also, the irreducible characters $\chi_\theta$ of
$S^1$ pull-back to irreducible characters of $\maT$ vanishing on
$\mathcal{K}$. Then the spectrum of $\maT$ is
\begin{equation*} 
  \widehat{\maT} \, = \, \{\pi\}\cup
  \{\chi_\theta\,,\,\theta\in S^1\}\,,
\end{equation*} 
with $S^1$ embedded as a closed subset. A subset $V \subset
\Prim(\maT)$ will be open if, and only if, it contains $\pi$ and its
intersection with $S^1$ is open. We thus see that the single element
set $\{\pi\}$ defines an exhausting family. In other words
$\widehat{\maT}=\overline{\{\pi\}} = \supp(\pi)$. Since every
exhausting family is also strictly norming, by Proposition
\ref{prop.full.strictly.norming}, the family $\{\pi\}$ consisting of a
single representation is also strictly norming.
We can see also directly that the family $\maF = \{\pi\}$ (consisting
of $\pi$ alone) is strictly norming. Indeed, it suffices to notice that
$\|x\|=\|\pi(x)\|$ for all $x$ since $\pi$ is injective.
In this example again every $\pi\rp \in \hat \maT = \Prim(\maT)$ has a
countable base for its system of neighborhoods, so every strictly
norming family of representations $\maF$ of $\maT$ is also exhausting.
\end{example}

Here are two more examples that show that the condition that $A$ be
separable is not necessary for the classes of exhausting families of
representations and strictly norming families of representations to
coincide.
\begin{example} 
Let $I$ be an infinite {\em uncountable} set. We endow it with the
discrete topology. Then $A_0 := \maC_0(I)$ and $A_1 :=\maK(\ell^2(I))$
(the algebra of compact operators on $\ell^2(I)$) are not separable,
however, if $\maF$ is a strictly norming family of representations of
$A_i$, $i=0, 1$, then $\maF$ is also an exhausting family of
representations of $A_i$.
\end{example}

\section{Unbounded operators}

The results of the previous sections are are relevant often in
applications to unbounded operators, so we now extend Theorem
\ref{thm.full} to (possibly) unbounded operators affiliated to
$C\sp{\ast}$-algebras. We begin with an abstract setting.

\subsection{Abstract affiliated operators}

The notion of affiliated self-adjoint operator has been extensively
and successfully studied, see \cite{BaajJulg, DamakGeorgescu,
  GeorgescuIftimovici, Vassout, Woronowicz} for example. In the sequel
we will closely follow the definitions in \cite{GeorgescuIftimovici},
beginning with an abstract version of this notion. See
\cite{KaadLesch, Pal} for results on unbounded operators on Hilbert
modules \cite{ConnesBook, Lance, ManuilovTroitsky}.

\begin{definition}
Let $A$ be a $C\sp{\ast}$-algebra. An \emph{observable $T$ affiliated
  to $A$} is a morphism $\theta_T : \maC_0(\RR) \to A$ of
$C\sp{\ast}$-algebras. The observable $T$ is said to be \emph{strictly
  affiliated to $A$} if the space generated by elements of the form
$\theta_T(h)a$ ($a \in A$, $h \in \maC_0(\RR)$), is dense in $A$.
\end{definition}

As in the classical case, we now introduce the Cayley transform.  To
this end, let us notice that an observable affiliated to $A$ extends
to a morphism $\theta_T\sp{+}\colon C_0(\RR)^+ \to A^+$ (the algebra
obtained from $A$ by adjunction of a unit). If moreover $T$ is
strictly affiliated to $A$, then $\theta_T$ extends to a morphism from
$C_b(\RR)$ to the multiplier algebra of $A$, but we shall not need
this fact.

\begin{definition}
Let $T$ be an observable affiliated to $A$. The {\em Cayley transform}
$u_T\in A^+$ of $T$ is
\begin{equation}\label{eq.Cayley}
	u_T := \theta_T\sp{+}(h_0)\,, \quad h_0(z) := (z + \imath)(z -
        \imath)^{-1} \,.
\end{equation}
\end{definition}

The Cayley transform allows us to reduce questions about the spectrum
of an observable to questions about the spectrum of its Cayley
transform. Let us first introduce, however, the {\em spectrum of an
  affiliated observable.} Let thus $\theta_T : \maC_0(\RR) \to A$ be a
self-adjoint operator affiliated to a $C\sp{\ast}$-algebra $A$. The
kernel of $\theta_T$ is then of the form $\maC_0(U)$, for some open
subset of $\RR$. We define the spectrum $\Spec_A(T)$ as the complement
of $U$ in $\RR$. Explicitly,
\begin{equation} \label{eq.def.spectrum}
  \Spec_A(T) \, = \, \{\lambda\in\RR , \ h(\lambda) = 0, \ \forall
  h\in\maC_0(\RR) \, \mbox{ such that } \, \theta_T(h)=0\, \} \;.
\end{equation} 
We allow the case $\Spec_A(T) = \emptyset$, which corresponds to the
case $T = \infty$ or $u_T = 1$. If $\sigma : A \to B$ is a morphism of
$C\sp{\ast}$-algebras, then $\sigma \circ \theta_T : \maC_0(\RR) \to
A$ is an observable $\sigma(T)$ affiliated to the $C\sp{\ast}$-algebra
$B$ and
\begin{equation}
  \Spec_B(\sigma(T)) \, \subset \, \Spec_A(T) \;.
\end{equation}  
If $\sigma$ is injective, then $\Spec_B(\sigma(T)) = \Spec_A(T)$,
which shows that the spectrum is preserved by increasing the
$C\sp{\ast}$-algebra $A$. Note that
\begin{equation}
   \sigma(u_T) \, = \, u_{\sigma(T)} \;.
\end{equation}
By classical results, if $(u_T - 1)$ is injective, then we can define
a true self-adjoint operator $T := \imath (u_T + 1) (u_T - 1)^{-1} \in
A$ such that $\theta_T(h) = h(T)$, $h \in \maC_0(\RR)$
\cite{Derezinski-Gerard}.  This is the case, for instance, if If
$\Spec(T)$ is a bounded subset of $\RR$, in which case we shall say
that $T$ is {\em bounded}. In any case, bounded or unbounded, our
definition of $\Spec(T)$ in terms of $\theta_T$ coincides with the
classical spectrum of $T$ defined using the resolvent. Let $h_0(z) :=
(z + \imath)(z - \imath)^{-1}$, as before.

\begin{lemma}\label{lemma.s.inv}
The spectrum $\Spec(T)$ of the an observable $\theta_T :
\maC_0(\RR)\to A$ affiliated to the $C\sp{\ast}$-algebra $A$ and the
spectrum $\Spec(u_T)$ of its Cayley transform are related by
\begin{equation*}
  \Spec(T) \, = \, h_0^{-1}\left(\Spec(u_T) \right) \;.
\end{equation*}
\end{lemma}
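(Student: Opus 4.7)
The plan is to interpret $\theta_T^+$ as the continuous functional calculus morphism for $u_T$ via the homeomorphism $h_0$, and then read off the claimed identity as the correspondence of supports under this homeomorphism.

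First, I would recall that $\maC_0(\RR)^+ = \maC(\widehat{\RR})$, where $\widehat{\RR} = \RR \cup \{\infty\}$ is the one-point compactification, and that $h_0$ extends to a homeomorphism $\tilde h_0 : \widehat{\RR} \to S^1$ sending $\infty$ to $1$ (since $h_0(z) - 1 = 2\imath/(z-\imath) \in \maC_0(\RR)$). Thus $\tilde h_0$ induces a $C^*$-isomorphism $h_0^* : \maC(S^1) \to \maC(\widehat{\RR})$ by pull-back. Composing, I obtain a unital morphism
\begin{equation*}
  \Psi \, := \, \theta_T^+ \circ h_0^* \, : \, \maC(S^1) \, \longrightarrow \, A^+ \,.
\end{equation*}
The key observation is that $\Psi(z) = \theta_T^+(h_0) = u_T$, where $z$ denotes the identity function on $S^1$. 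Hence $\Psi$ is precisely the continuous functional calculus morphism for the element $u_T \in A^+$, so by classical $C^*$-algebra theory (or Lemma \ref{lemma.general}) its kernel is exactly the ideal of continuous functions on $S^1$ vanishing on $\Spec(u_T)$.

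Next I would translate this back through $h_0^*$. Since $h_0^*$ is an isomorphism, $\ker(\theta_T^+) = h_0^*(\ker \Psi)$, i.e.\ $\ker(\theta_T^+)$ is exactly the ideal of functions $g \in \maC(\widehat{\RR})$ vanishing on $\tilde h_0^{-1}(\Spec(u_T))$. On the other hand, from the unitization $\theta_T^+(a,c) = \theta_T(a) + c$ one reads $\ker(\theta_T^+) = \ker(\theta_T) \oplus 0$, and by the definition of $\Spec(T)$ in \eqref{eq.def.spectrum} this ideal consists of those $g \in \maC(\widehat{\RR})$ with $g|_{\Spec(T)} = 0$ and $g(\infty) = 0$ — in other words, functions vanishing on $\Spec(T) \cup \{\infty\}$. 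By Urysohn's lemma, two closed subsets of the compact Hausdorff space $\widehat{\RR}$ that define the same ideal must coincide, so
\begin{equation*}
  \tilde h_0^{-1}(\Spec(u_T)) \, = \, \Spec(T) \cup \{\infty\} \,.
\end{equation*}
Restricting this equality to $\RR$ (equivalently, removing $\infty$ on the right and $1 = \tilde h_0(\infty)$ on the left) yields $\Spec(T) = h_0^{-1}(\Spec(u_T))$, which is the claim.

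The only potentially subtle point is the bookkeeping between the non-unital algebra $\maC_0(\RR)$ used to define $\Spec(T)$ and the unital algebra $\maC(\widehat{\RR}) = \maC_0(\RR)^+$ on which the Cayley transform is naturally defined; once one observes that the value of $h_0$ at $\infty$ is $1$, everything matches up. Accordingly, I do not expect a serious obstacle here — the argument is essentially continuous functional calculus transported across the Cayley homeomorphism.
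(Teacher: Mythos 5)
Your argument is correct and is essentially the same as the paper's: the paper's own proof is a one-line appeal to the fact that $h_0$ is a homeomorphism of $\RR$ onto its image in $S^1$ together with ``the properties of the functional calculus,'' and your write-up simply fills in those details by transporting $\theta_T^+$ to the continuous functional calculus of the unitary $u_T$ and matching vanishing ideals on $\widehat{\RR}\simeq S^1$. The only point worth noting is that the identification $\ker(\theta_T^+)=\ker(\theta_T)\oplus 0$ relies on the convention $A^+=A\oplus\CC$ (a genuinely adjoined unit), which is the convention the paper uses, so no further adjustment is needed.
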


\begin{proof}
This follows from the fact that $h_0$ is a homeomorphism of $\RR$ onto
its image in $S^1 := \{|z| = 1\}$ and from the properties of the
functional calculus.
\end{proof}

Let us notice that the above lemma is valid also in the case when
\begin{equation*}
  T = \infty \, \Leftrightarrow \, \theta_T = 0 \, \Leftrightarrow \,
  \Spec(T) = \emptyset \, \Leftrightarrow \, u_T = 1 \,
  \Leftrightarrow \, \sigma(u_T) = \{1\}\,.
\end{equation*}
One can make the relation in the above lemma more precise by saying
that, for bounded $T$, we have $h_0(\Spec(T))=\Spec(u_T)$, whereas for
unbounded $T$ we have
\begin{equation}
  \overline{ h_0(\Spec(T)) } = h_0(\Spec(T)) \cup \{1\} =
  \Spec(u_T) \,,
\end{equation}
where $h_0(z) := (z + \imath)(z - \imath)^{-1}$, as before.

Here is our main result on (possibly unbounded) self-adjoint operators
affiliated to $C\sp{\ast}$-algebras.

\begin{theorem}\label{thm.spectrum} 
Let $A$ be a unital $C\sp{\ast}$-algebra and $T$ an observable
affiliated to $A$.  Let $\maF$ be a set of representations of $A$.
\begin{enumerate}
\item If $\maF$ is strictly norming, then
\begin{equation*}
	\Spec(T) \, = \, \cup_{\phi \in \maF} \, \Spec(\phi(T)) \,.
\end{equation*}

\item If $\maF$ is faithful, then
\begin{equation*}
	\Spec(T) \, = \, \overline{\cup}_{\phi \in \maF} \,
        \Spec(\phi(T)) \,.
\end{equation*}
\end{enumerate}
\end{theorem}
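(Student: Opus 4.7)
The plan is to reduce both parts to the bounded-element results already proved in the paper --- Theorem \ref{thm.full2} for part (1) and Proposition \ref{prop.faithful4} for part (2) --- via the Cayley transform. Since $A$ is unital and $h_0 - 1 \in \maC_0(\RR)$, I would set $v_T := \theta_T(h_0 - 1) + 1_A \in A$, a realization of the Cayley transform inside $A$ itself (rather than in $A^+$). A short functional-calculus computation using $|h_0|^2 = 1$ on $\RR$ shows $v_T^* v_T = v_T v_T^* = 1_A$, so $v_T$ is a unitary in $A$; by naturality of the construction, $\phi(v_T) = v_{\phi(T)}$ for every representation $\phi$ of $A$.

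Since $v_T$ is unitary, $\Spec_A(v_T) \subseteq S^1$. Combining Lemma \ref{lemma.s.inv} with the fact that $h_0$ restricts to a homeomorphism $\RR \to S^1 \smallsetminus \{1\}$ gives
\begin{equation*}
  \Spec(T) \, = \, h_0^{-1}\bigl(\Spec_A(v_T) \smallsetminus \{1\}\bigr), \qquad \Spec(\phi(T)) \, = \, h_0^{-1}\bigl(\Spec(\phi(v_T)) \smallsetminus \{1\}\bigr).
\end{equation*}
For part (1), Theorem \ref{thm.full2} applied to $v_T \in A$ yields $\Spec_A(v_T) = \cup_{\phi \in \maF} \Spec(\phi(v_T))$. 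Removing $\{1\}$ commutes with unions, as does $h_0^{-1}$, so applying both operations produces the desired identity $\Spec(T) = \cup_{\phi \in \maF} \Spec(\phi(T))$.

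For part (2), since $v_T$ is normal (being unitary), Proposition \ref{prop.faithful4} gives $\Spec_A(v_T) = \overline{\cup}_{\phi \in \maF} \Spec(\phi(v_T))$ in $S^1$. To finish I would invoke the point-set fact that $\overline{S} \smallsetminus \{1\} = \overline{S \smallsetminus \{1\}} \smallsetminus \{1\}$ for any $S \subseteq S^1$, together with the fact that $h_0^{-1} : S^1 \smallsetminus \{1\} \to \RR$ is a homeomorphism (so it maps closures to closures). Applying $h_0^{-1}$ to both sides of the closure identity then transports it to $\RR$, giving $\Spec(T) = \overline{\cup}_{\phi \in \maF} \Spec(\phi(T))$.

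The main obstacle is purely bookkeeping around the point $1 \in S^1$, which corresponds under $h_0^{-1}$ to ``infinity'': its membership or non-membership in $\Spec_A(v_T)$, and in each $\Spec(\phi(v_T))$, records whether $T$ (respectively $\phi(T)$) is unbounded. Once this edge case is handled carefully in the closure argument of part (2), both statements follow directly from the bounded-case results, so the theorem is essentially a translation of Theorem \ref{thm.full2} and Proposition \ref{prop.faithful4} through the Cayley transform.
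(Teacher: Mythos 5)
Your proposal is correct and follows essentially the same route as the paper: reduce to Theorem \ref{thm.full2} (exhausting case) and Proposition \ref{prop.faithful4} (faithful case) via the Cayley transform and Lemma \ref{lemma.s.inv}, then transport unions and closures back through $h_0^{-1}$. Your only deviation is cosmetic but slightly tidier: by realizing the Cayley transform as the unitary $v_T = \theta_T(h_0-1)+1_A$ inside the unital algebra $A$ itself (rather than as $u_T \in A^+$), you sidestep having to check that the hypotheses on $\maF$ transfer to $A^+$, and your explicit handling of the point $1 \in S^1$ in the closure identity makes the faithful case airtight.
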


\begin{proof} 
The proofs of (i) and (ii) are similar, starting with the relation
$\Spec(T) \, = \, h_0^{-1}(\Spec(u_T))$ of Lemma \ref{lemma.s.inv}. We
begin with (i), which is slightly easier. Since $\maF$ is strictly
norming, we can then apply theorem \ref{thm.full2} to $u_T\in A^+$ and
the family $\sigma\in\maF$. We obtain
\begin{multline*}
  \Spec(T) \, = \, h_0^{-1} \left [ \Spec(u_T) \right ] \, = \,
  h_0^{-1} \left [ \cup_{\sigma\in\maF}\, \Spec(\sigma(u_T)) \right ]
  \\
    \, = \, h_0^{-1}\left [ \cup_{\sigma\in\maF}\,
      \Spec(u_{\sigma(T)}) \right ]
    \, = \, \cup_{\sigma\in\maF}\, h_0^{-1} \left[
      \Spec(u_{\sigma(T)}) \right ]
    \, = \, \cup_{\sigma\in\maF}\, \Spec(\sigma(T)) \,.
\end{multline*}

If, on the other hand, $\maF$ is faithful, we apply proposition
\ref{prop.faithful4} after noting that $h_0$ is a homeomorphism of
$\RR$ onto its image in $S^1 := \{|z| = 1\}$ and hence
$h_0^{-1}(\overline{S}) = \overline{h_0^{-1}(S)}$ for any $S \subset
S^1$. The same argument then gives
\begin{multline*}
  \Spec(T) \, = \, h_0^{-1} \left [ \Spec(u_T) \right ] \, = \,
  h_0^{-1} \left [ \,\overline{\cup}_{\sigma\in\maF}\,
    \Spec(\sigma(u_T)) \right ] \\
    \, = \, h_0^{-1} \left [ \, \overline{\cup}_{\sigma\in\maF}\,
      \Spec(u_{\sigma(T)}) \right ]
    \, = \, \overline{\cup}_{\sigma\in\maF}\, h_0^{-1} \left[
      \Spec(u_{\sigma(T)}) \right ]
   \, = \, \overline{\cup}_{\sigma\in\maF}\, \Spec(\sigma(T)) \,.
\end{multline*} 
The proof is now complete.
\end{proof}

\begin{remark}\label{remark.spectrum}
In view of the remarks preceding it, Theorem \ref{thm.spectrum}
remains valid for true self-adjoint operators $T$.
\end{remark}

\subsection{The case of \lp true\rp\ operators}

We now look at concrete (true) operators.

\begin{definition}
Let $A\subset \mathcal{L}(\maH)$ be a sub-$C\sp{\ast}$-algebra of
$\mathcal{L}(\maH)$. A (possibly unbounded) self-adjoint operator $T :
\maD(T) \subset \maH \to \maH$ is said to be {\em affiliated} to $A$
if, for every continuous functions $h$ on the spectrum of $T$
vanishing at infinity, we have $h(T)\in A$.
\end{definition}
 
\begin{remark} 
We have that $T$ is affiliated to $A$ if, and only if,
$(T-\lambda)^{-1}\in A$ for one $\lambda \notin \Spec(T)$
(equivalently for all such $\lambda$) \cite{DamakGeorgescu}.  We thus
see that a self-adjoint operator $T$ affiliated to $A$ defines a
morphism $\theta_T : \maC_0(\RR) \to A$, $\theta_T(h) := h(T)$ such
that $\Spec(T) = \Spec(\theta_T)$. Thus $T$ defines an observable
affiliated to $A$.
\end{remark}

Since in our paper we shall consider only the case when
$A\subset\maL(\maH)$ is non degenerate, we shall not make a difference
between operators and observables affiliated to $A$.  Recall that an
unbounded operator $T$ is invertible if, and only if, it is bijective
and $T^{-1}$ is bounded. This is also equivalent to $0 \notin
\Spec(\theta_T)$. We have the following analog of Proposition
\ref{prop.faithful2} and Theorem \ref{thm.full}

\begin{theorem}\label{thm.spectrum2} 
Let $A \subset \maL(\maH)$ be a unital $C\sp{\ast}$-algebra and $T$ a
self-adjoint operator affiliated to $A$.  Let $\maF$ be a set of
representations of $A$.
\begin{enumerate}
\item Let $\maF$ be strictly norming. Then $T$ is invertible if, and
  only if $\phi(T)$ is invertible for all $\phi \in \maF$.

\item Let $\maF$ be faithful. Then $T$ is invertible if, and only if
  $\phi(T)$ is invertible for all $\phi \in \maF$ and the set $\{
  \|\phi(T)^{-1}\|, \phi \in \maF \}$ is bounded.
\end{enumerate}
\end{theorem}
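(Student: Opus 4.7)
The plan is to reduce both parts to the spectral identifications already established in Theorem \ref{thm.spectrum}, via the observation (recalled just before the statement) that a self-adjoint affiliated operator $T$ is invertible if, and only if, $0 \notin \Spec(T)$. Throughout, I use the relation $\theta_{\phi(T)} = \phi \circ \theta_T$, so that $\phi(T)$ is the self-adjoint operator affiliated to $\phi(A) \subset \maL(\maH_\phi)$ corresponding to the pushed-forward observable, and in particular $\Spec(\phi(T))$ is the spectrum of this operator in the sense of \eqref{eq.def.spectrum}.

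For part (i), since $\maF$ is exhausting, Theorem \ref{thm.spectrum}(1) yields the identity
\begin{equation*}
\Spec(T) \, = \, \cup_{\phi \in \maF}\, \Spec(\phi(T)) \,.
\end{equation*}
Therefore $0 \notin \Spec(T)$ if, and only if, $0 \notin \Spec(\phi(T))$ for every $\phi \in \maF$, and this is precisely the stated characterization of invertibility.

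For part (ii), the ``only if'' direction is straightforward: if $T$ is invertible, then $d := \mathrm{dist}(0, \Spec(T)) > 0$, and since $\Spec(\phi(T)) \subset \Spec(T)$ for any morphism $\phi$, we obtain $\|\phi(T)^{-1}\| \le 1/d$ uniformly in $\phi \in \maF$. For the converse, assume each $\phi(T)$ is invertible and $\|\phi(T)^{-1}\| \le M$ for all $\phi \in \maF$. By the self-adjoint functional calculus, the invertibility of $\phi(T)$ together with the bound on the inverse forces $\Spec(\phi(T)) \subset \{\lambda \in \RR : |\lambda| \ge 1/M\}$. Hence the union $\cup_{\phi \in \maF}\, \Spec(\phi(T))$ lies in this closed subset of $\RR$, and so does its closure. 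Theorem \ref{thm.spectrum}(2) then gives
\begin{equation*}
\Spec(T) \, = \, \overline{\cup}_{\phi \in \maF}\, \Spec(\phi(T)) \, \subset \, \{\lambda \in \RR : |\lambda| \ge 1/M\} \,,
\end{equation*}
so $0 \notin \Spec(T)$ and $T$ is invertible.

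The only delicate point is in the converse of (ii): one must see why the uniform bound on $\|\phi(T)^{-1}\|$ is exactly the extra hypothesis needed to rule out $0$ being merely a limit point of $\cup_{\phi \in \maF} \Spec(\phi(T))$ without lying in any individual $\Spec(\phi(T))$. This is the obstacle that is absent in the exhausting case, where the union is already closed, and it is handled by the elementary identity $\|\phi(T)^{-1}\| = \mathrm{dist}(0,\Spec(\phi(T)))^{-1}$ valid for self-adjoint operators.
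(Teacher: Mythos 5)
Your proof is correct and follows essentially the same route as the paper's: both parts are reduced to Theorem \ref{thm.spectrum} via the equivalence of invertibility with $0 \notin \Spec(T)$, and the uniform bound in (ii) is handled through the identity $\|\phi(T)^{-1}\| = \mathrm{dist}\bigl(0,\Spec(\phi(T))\bigr)^{-1}$ for self-adjoint operators. The paper's own proof of (ii) is a single sentence invoking exactly this distance formula; your write-up simply makes that step explicit.
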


\begin{proof}
This follows from Theorem \ref{thm.spectrum} as follows. First of all,
we have that $T$ is invertible if, and only if, $0 \notin
\Spec(T)$. Now, if $\maF$ is strictly norming, we have
\begin{equation*}
  0 \notin \Spec(T) \ \Leftrightarrow \ 0 \notin \cup_{\phi \in \maF}
  \, \Spec(\phi(T) ) \ \Leftrightarrow \ 0 \notin \Spec(\phi(T) )
  \ \mbox{ for all } \ \phi \in \maF \;.
\end{equation*}
This proves (i). (Note that $\phi(T)$ may not be a true operator, but
only and affiliated observable.)
To prove (ii), we proceed similarly, noticing also that the distance
from $0$ to $\Spec(T)$ is exactly $\|T^{-1}\|$.
\end{proof}

We have already remarked (Remark \ref{remark.spectrum}) that Theorem
\ref{thm.spectrum} extends to the framework of this subsection, that
is, that of (possibly unbounded) self-adjoint operators on a Hilbert
space.

\section{Parametric pseudodifferential operators}

Let $M$ be a compact smooth Riemannian manifold and $G$ be a Lie group
(finite dimensional) with Lie algebra $\mfkg :=
\operatorname{Lie}(G)$.  We let $G$ act by left translations on $M
\times G$. We denote by $\Psi^0(M \times G)^G$ the algebra of order
$0$, $G$-invariant pseudodifferential operators on $M \times G$ and
$\overline{\Psi^0(M \times G)^G}$ be its norm closure acting on $L^2(M
\times G)$. For any vector bundle $E$, we denote by $S^*E$ the set of
directions in its dual $E^*$. If $E$ is endowed with a metric, then
$S^*E$ can be identified with the set of unit vectors in $E^*$.  We
shall be interested the the quotient
\begin{equation*}
 S^*(T(M \times G))/G \, = \, S^*(TM \times TG)/G \, = \, S^*(TM
 \times \mfkg) \,.
\end{equation*}
We have that $\overline{\Psi^0(M \times G)^G} \simeq C_r^*(G) \otimes
\maK$ and then obtain the exact sequence
\begin{equation}\label{eq.Gexact}
  0 \, \to \, C_r^*(G) \otimes \maK \, \to \, \overline{\Psi^0(M
    \times G)^G} \, \to \, \maC(S^*(M \times \mfkg)) \, \to \, 0 \,,
\end{equation}
\cite{LMN1, LNGeometric, Monthubert, Vassout}.  Note that the kernel
of the symbol map will now have irreducible representations
parametrized by $\hat{G}_r$ the temperate unitary irreducible
representations of $G$. Let $T \in \Psi^m(M \times G)^G$ and denote by
$T\sp{\sharp} \in \Psi^m(M \times G)^G$ its formal adjoint (defined
using the calculus of pseudodifferential operators). All operators
considered below are closed with minimal domain (the closure of the
operators defined on $\CIc(M \times G)$).  We denote by $T\sp{*}$ the
Hilbert space adjoint of a (possibly unbounded) densely defined
operator.

\begin{lemma}\label{lemma.affiliated}
Let $T \in \Psi^m(M \times G)^G$ be elliptic. Then $T\sp{*} =
T\sp{\sharp}$.  Thus, if also $T = T\sp{\sharp}$, then $T$ is {\em
  self-adjoint} and $(T + \imath )\sp{-1} \in C_r^*(G)$, and hence it
is affiliated to $C_r^*(G)$.
\end{lemma}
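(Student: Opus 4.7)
The plan is to prove the three assertions in sequence, exploiting the pseudodifferential calculus together with the exact sequence \eqref{eq.Gexact}.

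First I would establish the identity $T^{*}=T^{\sharp}$. By definition of the formal adjoint, $(Tu,v)=(u,T^{\sharp}v)$ for all $u,v\in\CIc(M\times G)$, so $T^{\sharp}\subset T^{*}$ as unbounded operators with minimal domain. The reverse inclusion is the content of local elliptic regularity: since $T$ is elliptic of order $m$, any $v\in\maD(T^{*})$ satisfies $T^{\sharp}v\in L^{2}$ in the distributional sense, so elliptic regularity gives $v\in H^{m}_{\mathrm{loc}}(M\times G)$; combined with the $G$-invariance (which lets us reduce to local considerations on the compact quotient $(M\times G)/G\simeq M$) and with a standard cut-off/commutator argument à la Chernoff on the complete Riemannian manifold $M\times G$, one shows that $\CIc(M\times G)$ is a core for $T^{*}$, so $T^{*}=\overline{T^{\sharp}|_{\CIc}}=T^{\sharp}$. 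In particular, if $T=T^{\sharp}$ then $T=T^{*}$, i.e.\ $T$ is self-adjoint.

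Next, self-adjointness of $T$ gives that $T+\imath$ is a bijection $\maD(T)\to L^{2}(M\times G)$ with bounded inverse $(T+\imath)^{-1}\in\maL(L^{2}(M\times G))$ of norm at most $1$. I want to identify this inverse with an element of the ideal $C_{r}^{*}(G)\otimes\maK$ appearing in \eqref{eq.Gexact}. By the parametrix construction in the $G$-invariant pseudodifferential calculus, one obtains $Q\in\Psi^{-m}(M\times G)^{G}$ with $QT-I,\ TQ-I\in\Psi^{-\infty}(M\times G)^{G}$; a standard Neumann series/regularization argument then shows $(T+\imath)^{-1}\in\overline{\Psi^{0}(M\times G)^{G}}$. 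Since its principal symbol is of order $-m<0$, it vanishes at fibre infinity, so the image of $(T+\imath)^{-1}$ under the symbol map in \eqref{eq.Gexact} is zero. Hence $(T+\imath)^{-1}$ belongs to the kernel of the symbol map, which is precisely $C_{r}^{*}(G)\otimes\maK$.

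Finally, by the Remark following the definition of an affiliated self-adjoint operator in the previous section, having a single resolvent $(T+\imath)^{-1}$ inside the $C^{*}$-algebra $C_{r}^{*}(G)\otimes\maK$ is equivalent to $T$ being affiliated to that algebra (and \emph{a fortiori} to any larger $C^{*}$-subalgebra of $\maL(L^{2})$ containing it, which is the sense in which the conclusion as stated should be read). The main obstacle I expect is the first step: the identity $T^{*}=T^{\sharp}$ is standard on compact manifolds but requires some care on the non-compact manifold $M\times G$; the essential input is completeness of $M\times G$ together with the $G$-invariance, which together ensure that $\CIc(M\times G)$ is a core for the maximal extension of any elliptic $G$-invariant pseudodifferential operator.
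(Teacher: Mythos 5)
The paper itself gives only a one\--line proof: the statement is declared to be a consequence of the fact that $\Psi^{\infty}(M\times G)^G$ is closed under products and formal adjoints, with the actual work delegated to the cited references; what that closure property buys is precisely a parametrix $Q\in\Psi^{-m}(M\times G)^G$ with residues in $\Psi^{-\infty}(M\times G)^G$, and the whole lemma (both $T^{*}=T^{\sharp}$ and the membership of the resolvent in the ideal $C_r^*(G)\otimes\maK$) is extracted from that single algebraic fact. Your second and third steps follow exactly this route, and your reading of ``$(T+\imath)^{-1}\in C_r^*(G)$'' as membership in the kernel $C_r^*(G)\otimes\maK$ of the symbol map of \eqref{eq.Gexact} is the right one.

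The genuine gap is in your first step. A ``cut-off/commutator argument \`a la Chernoff'' establishes essential self-adjointness for \emph{first-order differential} operators with finite propagation speed on complete manifolds (and for their powers); it does not apply to an order-$m$ pseudodifferential operator: such an operator is non-local (so cut-offs do not even localize it), and the commutator $[T,\chi]$ with a cut-off has order $m-1\ge 1$, hence is not uniformly bounded on $L^2$, which is exactly what the Chernoff/Gaffney scheme needs. Likewise, local elliptic regularity only yields $v\in H^m_{\loc}$, and passing from that to graph-norm approximation by $\CIc(M\times G)$ is the whole difficulty on a non-compact manifold. The correct argument --- and the one the paper's one-line proof is pointing at --- is the same parametrix you already use later: for $v\in\maD(T^{*})=\maD\bigl((T^{\sharp})_{\max}\bigr)$ write $v=Q\,T^{\sharp}v+Rv$ with $QT^{\sharp}=1-R$, $Q\in\Psi^{-m}(M\times G)^G$, $R\in\Psi^{-\infty}(M\times G)^G$; since $Q$ maps $L^2$ continuously into $H^m(M\times G)$ (the minimal domain, i.e.\ the graph closure of $T^{\sharp}|_{\CIc}$) and $R$ maps $L^2$ into $H^{\infty}$, one gets $\maD(T^{*})\subset\maD\bigl(\overline{T^{\sharp}|_{\CIc}}\bigr)$, hence $T^{*}=T^{\sharp}$. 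So the fix uses only tools you already have in hand, but as written the first step would not go through.
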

\begin{proof} 
This is a consequence of the fact that $\Psi\sp{\infty}(M \times G)^G$
is closed under multiplication and formal adjoints.  See \cite{LMN1,
  LNGeometric, Vassout} for details.
\end{proof}

In other words, any elliptic, formally self-adjoint $T \in \Psi^m(M
\times G)^G$, $m > 0$, is actually self-adjoint.

Let us assume $G = \RR^n$, regarded as an abelian Lie group. Then our
exact sequence \eqref{eq.Gexact} becomes
\begin{equation}\label{eq.Gexact-bis}
  0 \to \maC_0(\RR^n) \otimes \maK \to \overline{\Psi^0(M \times
    \RR^n)^{\RR^n}} \to \maC(S^*(TM \times \RR^n )) \to 0 \,.
\end{equation}
This shows that $A := \overline{\Psi^0(M \times \RR^n)^{\RR^n}}$ is a
type I $C\sp{\ast}$-algebra, and hence we can identify $\hat A $ and
$\Prim(A)$.  Then we use that, to each $\lambda \in \RR^n$, there
corresponds an irreducible representation $\phi_\lambda$ of
$\maC_0(\RR^n) \otimes \maK$. Recalling that every irreducible
(bounded, *) representation of an ideal $I$ in a $C\sp{\ast}$-algebra
$A$ extends uniquely to a representation of $A$, we obtain that
$\phi_\lambda$ extends uniquely to an irreducible representation of
$\overline{\Psi^0(M \times \RR^n)^{\RR^n}}$ denoted with the same
letter. It is customary to denote by $\hat{T}(\lambda) :=
\phi_\lambda(T)$ for $T$ a pseudodifferential operator in $\Psi^m(M
\times \RR^n)^{\RR^n}$, $m \ge 0$. To define $\hat{T}(\lambda)$ for $m
> 0$, we can either use the Fourier transform or, notice that $\Delta$
is affiliated to the closure of $\Psi^0(M \times \RR^n)^{\RR^n}$. This
allows us to define $\hat {\Delta}(\lambda)$.  In general, we write $T
= (1 - \Delta)\sp{k} S$, with $S \in \Psi^0(M \times \RR^n)^{\RR^n}$
and define $\widehat T(\lambda)q = \widehat{(1 -
  \Delta)}(\lambda)\sp{k} \hat S(\lambda)$. (We consider the \dlp
analyst\rp s\drp\ Laplacian, so $\Delta \le 0$.)

\begin{lemma}\label{lemma.spectrum}
 Let $A := \overline{\Psi^0(M \times \RR^n)^{\RR^n}}$. Then the
 primitive ideal spectrum of $A$, $\Prim(A)$, is in a canonical
 bijection with the disjoint union $\RR\sp{n} \cup S^*(TM \times
 \RR^n)$, where the copy of $\RR\sp{n}$ corresponds to the open subset
 $\{\phi_\lambda, \lambda \in \RR\sp{n} \}$ and the copy of $S^*(TM
 \times \RR^n)$ corresponds to the closed subset $\{e_p, p \in S^*(TM
 \times \RR^n)\}$.  The induced topologies on $\RR\sp{n}$ and $S^*(TM
 \times \RR^n)$ are the standard ones.  Let $S^*M := S\sp{*}(TM)
 \subset S^*(TM \times \RR^n)$ correspond to $T\sp{*}M \subset
 T\sp{*}M \times \RR\sp{n}$. Then the closure of $\{\phi_{\lambda}\}$
 in $\Prim(A)$ is $\{\phi_{\lambda}\} \cup S\sp{*}M$.
\end{lemma}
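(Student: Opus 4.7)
\medskip\noindent\textbf{Proof plan.} The plan is to exploit the exact sequence \eqref{eq.Gexact-bis}. For any closed two-sided ideal $I$ in a $C^*$-algebra $A$, the primitive ideal spectrum decomposes as a disjoint union $\Prim(A) = \hat I \sqcup (\Prim(A) \smallsetminus \hat I)$, where $\hat I \cong \Prim(I)$ is open and its complement is canonically identified with $\Prim(A/I)$, each carrying its own Jacobson topology. I would apply this with $I := J = \maC_0(\RR^n) \otimes \maK$ and identify the two pieces separately. Since $\maK$ is simple, $\Prim(J) \cong \Prim(\maC_0(\RR^n)) = \RR^n$ with its standard topology; the bijection sends $\lambda \in \RR^n$ to the unique irreducible representation of $J$ extending evaluation-at-$\lambda$, namely $\phi_\lambda|_J$, and hence to $\phi_\lambda \in \hat A$, since every irreducible representation of an ideal extends uniquely to the ambient $C^*$-algebra. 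For the quotient, $A/J \cong \maC(S^*(TM \times \RR^n))$ is commutative, so $\Prim(A/J) \cong S^*(TM \times \RR^n)$ via the evaluations $e_p$, with its standard compact topology.

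It then remains to compute the closure of $\{\phi_\lambda\}$ in $\Prim(A)$, i.e. the set of primitive ideals containing $K_\lambda := \ker(\phi_\lambda)$. If $\mu \neq \lambda$, then any element $f \otimes k \in J$ with $f(\mu) \neq 0 = f(\lambda)$ lies in $K_\lambda$ but not in $\ker(\phi_\mu)$, so no other $\phi_\mu$ lies in the closure. Each $e_p$ contains $J$, hence contains $K_\lambda$ if and only if $\sigma_0(T)(p) = 0$ for every $T \in K_\lambda$. My claim is that this locus is exactly $S^*M$. For the inclusion $S^*M \subset \{p : \sigma_0(T)(p) = 0 \ \forall T \in K_\lambda\}$, I would use that $\hat T(\lambda) = \phi_\lambda(T)$ lies in $\overline{\Psi^0(M)}$ and that, by $\RR^n$-invariance and the $0$-homogeneity of the principal symbol, the symbol of $\hat T(\lambda)$ at $(x,\xi) \in S^*M$ equals $\sigma_0(T)(x,\xi,0)$ (take the limit of $\sigma_0(T)$ along $(r\xi,\lambda)/\|(r\xi,\lambda)\| \to (\xi,0)$ as $r \to \infty$); hence $T \in K_\lambda$ forces $\sigma_0(T)|_{S^*M} = 0$. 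For the reverse inclusion, given $f \in \maC(S^*(TM \times \RR^n))$ with $f|_{S^*M} = 0$, I would lift $f$ to some $T_0 \in A$; then $\hat T_0(\lambda)$ has vanishing principal symbol and is therefore compact; choosing $g \in \maC_0(\RR^n)$ with $g(\lambda) = 1$ and setting $S := g \otimes \hat T_0(\lambda) \in J$, the element $T := T_0 - S$ satisfies $\hat T(\lambda) = 0$ and $\sigma_0(T) = f$, so $f \in \sigma_0(K_\lambda)$.

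The main obstacle will be the symbolic identification of $\sigma(\hat T(\lambda))|_{S^*M}$ with $\sigma_0(T)|_{S^*M}$. It is a standard consequence of the PDO calculus for $\RR^n$-invariant operators together with zero-order homogeneity, but it is the only step that is not a matter of pure $C^*$-algebra theory. Everything else follows cleanly from the $\Prim$ decomposition along an ideal plus the known computations of $\Prim$ for $\maC_0(X)$ and $\maC_0(X) \otimes \maK$.
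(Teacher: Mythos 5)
Your proposal is correct and follows essentially the same route as the paper: the first part via the standard decomposition of $\Prim(A)$ along the ideal $\maC_0(\RR^n)\otimes\maK$, and the closure of $\{\phi_\lambda\}$ via the key fact that the principal symbol of $\hat T(\lambda)$ is the restriction of $\sigma_0(T)$ to $S^*M$, which the paper likewise establishes by the local-coordinate computation $\hat P(\lambda)=\tilde a(x,D_x,\lambda)$. The only (inessential) difference is that you compute directly which primitive ideals contain $\ker(\phi_\lambda)$, with an explicit lifting argument showing $\sigma_0(\ker\phi_\lambda)$ is exactly the ideal of symbols vanishing on $S^*M$, whereas the paper reaches the same conclusion by identifying the closure with $\Prim(\overline{\Psi^0(M)})$ via the surjection $\phi_\lambda\colon A\to\overline{\Psi^0(M)}$.
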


\begin{proof} 
By standard properties of $C\sp{\ast}$-algebras (the definition of the
Jacobson topology), the ideal $\maC_0(\RR^n) \otimes \maK \subset A$
defines an open subset of $\Prim(A)$ with complement $\Prim(A/I)$ with
the induced topologies.  This proves the first part of the statement.

In order to determine the closure of $\{\phi_{\lambda}\}$ in
$\Prim(A)$, let us notice that the principal symbol of
$\hat{T}(\lambda)$ can be calculated in local coordinate carts on $M$
(more precisely, on sets of the form $U \times \RR\sp{n}$, with $U$ a
coordinate chart in $M$). This gives that the principal symbol of
$\hat{T}(\lambda)$ is given by the restriction of the principal symbol
of $T$ to $S\sp{*}M$.

Indeed, let $U = \RR\sp{k}$. A translation invariant
pseudodifferential $P$ operator on $U \times \RR\sp{n} = \RR\sp{k+n}$
is of the form $P = a(x, y, D_x, D_y)$ with $a$ independent of $y$:
$a(x, y, \xi, \eta) = \tilde{a}(x, \xi, \eta)$.  With this notation,
we have $\hat{P}(\lambda) = \tilde{a}(x, D_x, \lambda)$.  The
principals symbol of $\hat{P}(\lambda)$ is then the principal symbols
of the (global) symbol $\RR\sp{k} \ni (x, \xi) \to \tilde{a}(x, \xi,
\lambda)$, and is seen to be independent of the (finite) value of
$\lambda \in \RR\sp{n}$ and is the restriction from $S\sp{*}(TU \times
\RR\sp{n})$ to $S\sp{*}(TU \times \{0\})$ of the principal symbol of
$\tilde{a}$.

Returning to the general case, the same reasoning gives that the image
of $\phi_\lambda$ is $\overline{\Psi^0(M)}$. The primitive ideal
spectrum of this algebra is canonically homeomorphic to the closure of
$\{\phi_\lambda\}$, and this is enough to complete the proof.
\end{proof}

By the exact sequence \eqref{eq.Gexact}, in addition to the
irreducible representations $\phi_\lambda$, $\lambda \in \RR\sp{n}$
(or, more precisely, their kernels), $\Prim(A)$ contains also (the
kernels of) the irreducible representations $e_p(T) = \sigma_0(T)(p)$,
$p \in S^*(TM \times \RR^n)$.

\begin{proposition} Let $\maF:= \{\phi_\lambda, 
\lambda \in \RR^n \} \cup \{e_p, p \in S^*(TM \times \RR^n)
\smallsetminus S\sp{*}M \}$.
\begin{enumerate}[(i)]
\item The family $\maF$ is a strictly norming family of
  representations of $\overline{\Psi^0(M \times \RR^n)^{\RR^n}}$.
\item Let $P \in \Psi^m(M \times \RR^n)^{\RR^n}$, then $P: H^s(M
  \times \RR^n) \to H^{s-m} (M \times \RR^n)$ is invertible if, and
  only if $\hat{P}(\lambda) : H^s(M) \to H^{s-m}(M)$ is invertible for
  all $\lambda \in \RR^n$ and the principal symbol of $P$ is non-zero
  on all rays not intersecting $S\sp{*}M$.
\item If $T\in \Psi^m(M \times \RR^n)^{\RR^n}$, $m > 0$, is formally
  self-adjoint and elliptic, then $\Spec(e_p(T)) = \emptyset$,
  and hence
\begin{equation*}
	\Spec(T) \, = \, \cup_{\lambda \in \RR^n}\,
        \Spec(\hat{T}(\lambda)) \,.
\end{equation*}
\end{enumerate}
\end{proposition}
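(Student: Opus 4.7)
For (i), the strategy is to prove that $\maF$ is \emph{full}; exhaustingness then follows from Proposition \ref{prop.full.exhausting}. Using Lemma \ref{lemma.spectrum}, $\supp(\phi_\lambda)=\{\phi_\lambda\}\cup S\sp*M$ (the Jacobson closure of $\{\phi_\lambda\}$), while $\ker(e_p)$ is maximal in $A$ (since $e_p$ is a character) so $\supp(e_p)=\{e_p\}$. Taking the union over $\maF$,
\begin{equation*}
    \cup_{\phi\in\maF}\supp(\phi) \,=\, \RR\sp n \cup S\sp*M \cup \bigl(S\sp*(TM\times\RR\sp n)\setminus S\sp*M\bigr) \,=\, \Prim(A)\,.
\end{equation*}

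For (ii), I would reduce to the order-zero case. Let $\Lambda:=(1-\Delta_{M\times\RR\sp n})\sp{1/2}$, an $\RR\sp n$-invariant elliptic pseudodifferential operator, and set $\tilde P := \Lambda\sp{s-m}\,P\,\Lambda\sp{-s}\in\Psi\sp 0(M\times\RR\sp n)\sp{\RR\sp n}\subset A$. Since $\Lambda\sp s$ identifies $H\sp s$ with $L\sp 2$, the map $P:H\sp s\to H\sp{s-m}$ is invertible iff $\tilde P$ is invertible on $L\sp 2(M\times\RR\sp n)$, equivalently in $A$ by Lemma \ref{lemma.general}. By (i) together with Theorem \ref{thm.full}, this reduces to invertibility of $\phi(\tilde P)$ for every $\phi\in\maF$. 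A partial Fourier transform in the $\RR\sp n$ variable yields
\begin{equation*}
    \phi_\lambda(\tilde P)\,=\,(1-\Delta_M+|\lambda|\sp 2)\sp{(s-m)/2}\,\hat P(\lambda)\,(1-\Delta_M+|\lambda|\sp 2)\sp{-s/2}\,,
\end{equation*}
which is $L\sp 2(M)$-invertible precisely when $\hat P(\lambda):H\sp s(M)\to H\sp{s-m}(M)$ is invertible; meanwhile $e_p(\tilde P)=\sigma_m(P)(p)$ for $p$ in the unit cosphere bundle outside $S\sp*M$, since the symbol contributions of $\Lambda\sp{s-m}$ and $\Lambda\sp{-s}$ combine to $1$ on the unit sphere.

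For (iii), Lemma \ref{lemma.affiliated} gives that $T$ is self-adjoint with $(T\pm\imath)\sp{-1}\in\maC_0(\RR\sp n)\otimes\maK\subset A$, so $T$ is an observable affiliated to $A$. For $p\notin S\sp*M$ the character $e_p$ factors through the symbol map \eqref{eq.Gexact-bis} and vanishes on the ideal $\maC_0(\RR\sp n)\otimes\maK$; since $\maC_0(\RR)$ is generated as a $C\sp*$-algebra by the two resolvent functions $(t\pm\imath)\sp{-1}$, one deduces $e_p(h(T))=0$ for every $h\in\maC_0(\RR)$. Thus $\theta_{e_p(T)}=0$ and $\Spec(e_p(T))=\emptyset$. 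Combining (i) with Theorem \ref{thm.spectrum}(i) then gives
\begin{equation*}
    \Spec(T)\,=\,\cup_{\phi\in\maF}\Spec(\phi(T))\,=\,\cup_{\lambda\in\RR\sp n}\Spec(\hat T(\lambda))\,,
\end{equation*}
the $e_p$-contributions dropping out.

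The main technical point underlying this scheme is the assertion $(T+\imath)\sp{-1}\in\maC_0(\RR\sp n)\otimes\maK$ itself: after partial Fourier transform one must verify that $\lambda\mapsto(\hat T(\lambda)+\imath)\sp{-1}$ is compact-operator-valued on $L\sp 2(M)$ (automatic from ellipticity of $\hat T(\lambda)$ on the compact manifold $M$) and decays in norm as $|\lambda|\to\infty$. The decay is exactly where the strict positivity $m>0$ of the order is essential.
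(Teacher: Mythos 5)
Your proposal is correct and follows essentially the same route as the paper: (i) via fullness using Lemma \ref{lemma.spectrum} and Proposition \ref{prop.full.exhausting}, (ii) by conjugating with powers of $(1-\Delta)^{1/2}$ to reduce to an order-zero element of $A$ and applying Theorem \ref{thm.full}, and (iii) via affiliation (Lemma \ref{lemma.affiliated}) and Theorem \ref{thm.spectrum}. You supply somewhat more detail than the paper does (notably the Stone--Weierstrass argument for $\Spec(e_p(T))=\emptyset$), but the structure of the argument is the same.
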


\begin{proof} (i) follows from Lemma \ref{lemma.spectrum}.
To prove (ii), let us denote by $\Delta_{M} \le 0$ the (non-positive)
Laplace operator on $M$.  Then the Laplace operator $\Delta$ on $M
\times \RR^n$ is $\Delta = \Delta_{\RR^n} + \Delta_{M}$.  Note that
$(1 - \Delta)^{m/2} : H^s(M \times \RR^n) \to H^{s-m} (M \times
\RR^n)$ and $(c - \Delta_{M})^{m/2} : H^s(M) \to H^{s-m} (M)$, $c >
0$, are isomorphisms. By \cite{LNGeometric}, we have that
\begin{equation*}
	P_1 \, := \, (1 - \Delta)^{(s-m)/2} P (1 - \Delta)^{- s/2} \in
        A := \overline{\Psi^0(M \times \RR^n)^{\RR^n}} \, .
\end{equation*}
It is then enough to prove that $P_1$ is invertible on $L^2(M \times
\RR^n)$.  Moreover from part (i) we have just proved and Theorem
\ref{thm.full} we know that $P_1$ is invertible on $L^2(M \times
\RR^n)$ if, and only if, $\hat{P}_1(\lambda) := \phi_\lambda(P_1)$ is
invertible on $L^2(M)$ for all $\lambda \in \RR^n$ and the principal
symbol of $P_1$ is non-zero on all rays not intersecting $S\sp{*}M$.
But, using also $\widehat{1 - \Delta}(\lambda) = (1 + |\lambda|^2 -
\Delta_{M})$, we have
\begin{equation*}
	\hat{P}_1(\lambda) = (1 + |\lambda|^2 - \Delta_{M})^{(s-m)/2}
        \hat P(\lambda) (1 + |\lambda|^2 - \Delta_{M})^{- s/2}\,,
\end{equation*}
which is invertible by assumption.
%

To prove (iii), we recall that $T$ is affiliated to $A$, by Lemma
\ref{lemma.affiliated}. The result then follows from Theorem
\ref{thm.spectrum}(1) (See also Remark \ref{remark.spectrum}).
\end{proof}

Operators of the kind considered in this subsection were used also in
\cite{ALN2, ConnesFol, DebordLescure, LeschPflaum, MonthubertPierrot,
  SchroheFrechet, SchSch1}. They turn out to be useful also for
general topological index theorems \cite{EmersonMeyerLast, NT2}. A
more class of operators than the ones considered in this subsection
were introduced in \cite{ASkandalis1, ASkandalis2}. The above result
has turned out to be useful for the study of layer potentials
\cite{QiaoNistor1}.  There are, of course, many other relevant
examples, but developing them would require too much additional
materials, so we plan to discuss these other examples somewhere else.

\def\cprime{$'$}

\end{document}